\theoremstyle{definition}
\newtheorem{theorem}{Theorem}
\newtheorem{lemma}[theorem]{Lemma}
\newtheorem{definition}[theorem]{Definition}
\newtheorem{proposition}[theorem]{Proposition}
\newtheorem{alg}[theorem]{Algorithm}
\newtheorem{eg}[theorem]{Example} 
\newtheorem{remark}[theorem]{Remark} 
\newtheorem*{theorem*}{Theorem}
\newcommand{\R}{\mathbb R}
\begin{document}

\thispagestyle{fancy}
\fancyhead{}
\fancyfoot{}
\renewcommand{\headrulewidth}{0pt}
\cfoot{\thepage}
\rfoot{\today}

\vskip 1cm
\begin{center}
{\Large Iterative Respacing of Polygonal Curves}
\vskip 1cm

\begin{tabular*}{1.0\textwidth}{@{\extracolsep{\fill}} ll}
Marcella Manivel & Milena Silva\\
School of Mathematics& Division of Biostatistics \\
University of Minnesota   & University of Minnesota\\
{\tt maniv013@umn.edu} & {\tt silva343@umn.edu} \\
\end{tabular*}
\end{center}

\begin{center}
{
\centering
\begin{tabular*}{1.0\textwidth}{ @{\extracolsep{\fill}} l}
Robert Thompson\\
Department of Mathematics and Statistics  \\
Carleton College  \\
{\tt rthompson@carleton.edu}  \\
\end{tabular*}
}
\end{center}

\vskip 0.5cm\noindent
{\bf Keywords}:  polygonal curve, arclength, uniform mesh, image processing, shape comparison


\section*{Abstract}
$\indent$ A \emph{polygonal curve} is a collection of  $m$ connected line segments specified as the linear interpolation of a list of points $\{p_0, p_1, \ldots, p_m\}$. These curves may be obtained by sampling points from an oriented curve in $\R^n$.  In applications it can be useful for this sample of points to be close to \textit{equilateral}, with equal distance between consecutive points.  We present a computationally efficient method for respacing the points of a polygonal curve and show that iteration of this method converges to an equilateral polygonal curve.

\section{Introduction}  \label{sec:intro}

We introduce a method for respacing points on a polygonal curve and investigate the behavior of polygonal curves under iteration of this respacing.  A \emph{polygonal curve} (also \emph{polygonal chain} or \emph{polygonal path}) is a collection of  $m$ connected line segments specified by a sequence of points $\{p_0, p_1, \ldots, p_m\}$ in $\mathbb{R}^n$ called \emph{vertices}. Polygonal curves arise as discrete representations of continuous curves, obtained by sampling points from a continuous curve and connecting these points via linear interpolation.  A polygonal curve for which all Euclidean distances between consecutive points, $||p_{k} - p_{k-1}||$, $k = 1, \ldots, m$,  are equal will be called an \emph{equilateral polygonal curve} (or simply an \emph{equilateral curve}).  Just as the arclength parameterization of a continuous curve has theoretical and practical usefulness, working with equilateral curves instead of arbitrarily spaced polygonal curves is advantageous for certain applications.

The considerations of this paper arose from shape comparison in the context of object reassembly and automated jigsaw puzzle solving, \cite{goosst,ho,ity}.  In this context, an arbitrarily spaced polygonal curve is obtained via scanning and image processing.  This polygonal curve approximates the shape of some smoother underlying object.  It is then desirable to find an equilateral curve which approximates the original polygonal curve (and thus the underlying shape).  In \cite{ho,ity} this was accomplished by \textit{respacing} the original polygonal curve: a new set of vertices is chosen by sampling along the polygonal curve according to some fixed arclength,  and these new vertices define a polygonal curve which is empirically closer to being equilateral (see Figure \ref{fig:intro}).  This respacing process is then repeated until the resulting curve is close enough to equilateral to be used for shape comparison.  This paper makes a careful study of this repeated respacing process.

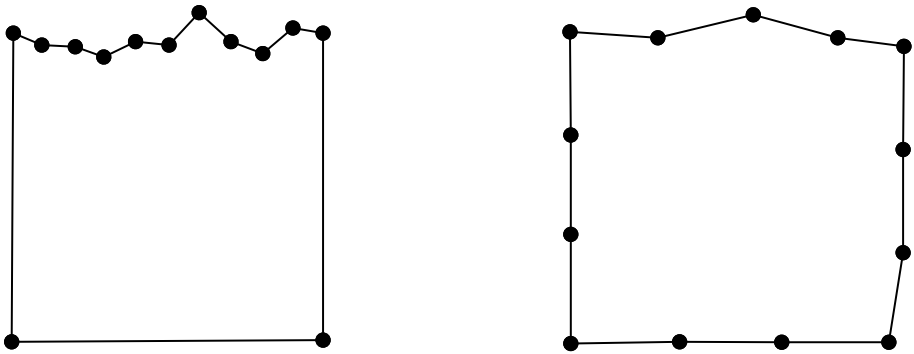
\begin{figure}[h]
    \centering

\tikzset{every picture/.style={line width=0.75pt}} 

\begin{tikzpicture}[x=0.75pt,y=0.75pt,yscale=-1,xscale=1]

\draw    (125,231.64) -- (125.84,75.83) ;
\draw [shift={(125.84,75.83)}, rotate = 270.31] [color={rgb, 255:red, 0; green, 0; blue, 0 }  ][fill={rgb, 255:red, 0; green, 0; blue, 0 }  ][line width=0.75]      (0, 0) circle [x radius= 3.35, y radius= 3.35]   ;
\draw [shift={(125,231.64)}, rotate = 270.31] [color={rgb, 255:red, 0; green, 0; blue, 0 }  ][fill={rgb, 255:red, 0; green, 0; blue, 0 }  ][line width=0.75]      (0, 0) circle [x radius= 3.35, y radius= 3.35]   ;
\draw    (282.04,230.78) -- (125,231.64) ;
\draw [shift={(125,231.64)}, rotate = 179.69] [color={rgb, 255:red, 0; green, 0; blue, 0 }  ][fill={rgb, 255:red, 0; green, 0; blue, 0 }  ][line width=0.75]      (0, 0) circle [x radius= 3.35, y radius= 3.35]   ;
\draw [shift={(282.04,230.78)}, rotate = 179.69] [color={rgb, 255:red, 0; green, 0; blue, 0 }  ][fill={rgb, 255:red, 0; green, 0; blue, 0 }  ][line width=0.75]      (0, 0) circle [x radius= 3.35, y radius= 3.35]   ;
\draw    (282.04,230.78) -- (282.04,75.83) ;
\draw [shift={(282.04,75.83)}, rotate = 270] [color={rgb, 255:red, 0; green, 0; blue, 0 }  ][fill={rgb, 255:red, 0; green, 0; blue, 0 }  ][line width=0.75]      (0, 0) circle [x radius= 3.35, y radius= 3.35]   ;
\draw [shift={(282.04,230.78)}, rotate = 270] [color={rgb, 255:red, 0; green, 0; blue, 0 }  ][fill={rgb, 255:red, 0; green, 0; blue, 0 }  ][line width=0.75]      (0, 0) circle [x radius= 3.35, y radius= 3.35]   ;
\draw    (140.2,81.86) -- (125.84,75.83) ;
\draw [shift={(125.84,75.83)}, rotate = 202.77] [color={rgb, 255:red, 0; green, 0; blue, 0 }  ][fill={rgb, 255:red, 0; green, 0; blue, 0 }  ][line width=0.75]      (0, 0) circle [x radius= 3.35, y radius= 3.35]   ;
\draw [shift={(140.2,81.86)}, rotate = 202.77] [color={rgb, 255:red, 0; green, 0; blue, 0 }  ][fill={rgb, 255:red, 0; green, 0; blue, 0 }  ][line width=0.75]      (0, 0) circle [x radius= 3.35, y radius= 3.35]   ;
\draw    (157.08,82.72) -- (140.2,81.86) ;
\draw [shift={(140.2,81.86)}, rotate = 182.92] [color={rgb, 255:red, 0; green, 0; blue, 0 }  ][fill={rgb, 255:red, 0; green, 0; blue, 0 }  ][line width=0.75]      (0, 0) circle [x radius= 3.35, y radius= 3.35]   ;
\draw [shift={(157.08,82.72)}, rotate = 182.92] [color={rgb, 255:red, 0; green, 0; blue, 0 }  ][fill={rgb, 255:red, 0; green, 0; blue, 0 }  ][line width=0.75]      (0, 0) circle [x radius= 3.35, y radius= 3.35]   ;
\draw    (171.44,87.88) -- (157.08,82.72) ;
\draw [shift={(157.08,82.72)}, rotate = 199.79] [color={rgb, 255:red, 0; green, 0; blue, 0 }  ][fill={rgb, 255:red, 0; green, 0; blue, 0 }  ][line width=0.75]      (0, 0) circle [x radius= 3.35, y radius= 3.35]   ;
\draw [shift={(171.44,87.88)}, rotate = 199.79] [color={rgb, 255:red, 0; green, 0; blue, 0 }  ][fill={rgb, 255:red, 0; green, 0; blue, 0 }  ][line width=0.75]      (0, 0) circle [x radius= 3.35, y radius= 3.35]   ;
\draw    (187.48,80.13) -- (171.44,87.88) ;
\draw [shift={(171.44,87.88)}, rotate = 154.22] [color={rgb, 255:red, 0; green, 0; blue, 0 }  ][fill={rgb, 255:red, 0; green, 0; blue, 0 }  ][line width=0.75]      (0, 0) circle [x radius= 3.35, y radius= 3.35]   ;
\draw [shift={(187.48,80.13)}, rotate = 154.22] [color={rgb, 255:red, 0; green, 0; blue, 0 }  ][fill={rgb, 255:red, 0; green, 0; blue, 0 }  ][line width=0.75]      (0, 0) circle [x radius= 3.35, y radius= 3.35]   ;
\draw    (204.36,81.86) -- (187.48,80.13) ;
\draw [shift={(187.48,80.13)}, rotate = 185.82] [color={rgb, 255:red, 0; green, 0; blue, 0 }  ][fill={rgb, 255:red, 0; green, 0; blue, 0 }  ][line width=0.75]      (0, 0) circle [x radius= 3.35, y radius= 3.35]   ;
\draw [shift={(204.36,81.86)}, rotate = 185.82] [color={rgb, 255:red, 0; green, 0; blue, 0 }  ][fill={rgb, 255:red, 0; green, 0; blue, 0 }  ][line width=0.75]      (0, 0) circle [x radius= 3.35, y radius= 3.35]   ;
\draw    (219.56,65.5) -- (204.36,81.86) ;
\draw [shift={(204.36,81.86)}, rotate = 132.9] [color={rgb, 255:red, 0; green, 0; blue, 0 }  ][fill={rgb, 255:red, 0; green, 0; blue, 0 }  ][line width=0.75]      (0, 0) circle [x radius= 3.35, y radius= 3.35]   ;
\draw [shift={(219.56,65.5)}, rotate = 132.9] [color={rgb, 255:red, 0; green, 0; blue, 0 }  ][fill={rgb, 255:red, 0; green, 0; blue, 0 }  ][line width=0.75]      (0, 0) circle [x radius= 3.35, y radius= 3.35]   ;
\draw    (235.6,80.13) -- (219.56,65.5) ;
\draw [shift={(219.56,65.5)}, rotate = 222.37] [color={rgb, 255:red, 0; green, 0; blue, 0 }  ][fill={rgb, 255:red, 0; green, 0; blue, 0 }  ][line width=0.75]      (0, 0) circle [x radius= 3.35, y radius= 3.35]   ;
\draw [shift={(235.6,80.13)}, rotate = 222.37] [color={rgb, 255:red, 0; green, 0; blue, 0 }  ][fill={rgb, 255:red, 0; green, 0; blue, 0 }  ][line width=0.75]      (0, 0) circle [x radius= 3.35, y radius= 3.35]   ;
\draw    (251.64,86.16) -- (235.6,80.13) ;
\draw [shift={(235.6,80.13)}, rotate = 200.59] [color={rgb, 255:red, 0; green, 0; blue, 0 }  ][fill={rgb, 255:red, 0; green, 0; blue, 0 }  ][line width=0.75]      (0, 0) circle [x radius= 3.35, y radius= 3.35]   ;
\draw [shift={(251.64,86.16)}, rotate = 200.59] [color={rgb, 255:red, 0; green, 0; blue, 0 }  ][fill={rgb, 255:red, 0; green, 0; blue, 0 }  ][line width=0.75]      (0, 0) circle [x radius= 3.35, y radius= 3.35]   ;
\draw    (266.84,73.25) -- (251.64,86.16) ;
\draw [shift={(251.64,86.16)}, rotate = 139.65] [color={rgb, 255:red, 0; green, 0; blue, 0 }  ][fill={rgb, 255:red, 0; green, 0; blue, 0 }  ][line width=0.75]      (0, 0) circle [x radius= 3.35, y radius= 3.35]   ;
\draw [shift={(266.84,73.25)}, rotate = 139.65] [color={rgb, 255:red, 0; green, 0; blue, 0 }  ][fill={rgb, 255:red, 0; green, 0; blue, 0 }  ][line width=0.75]      (0, 0) circle [x radius= 3.35, y radius= 3.35]   ;
\draw    (282.04,75.83) -- (266.84,73.25) ;
\draw [shift={(266.84,73.25)}, rotate = 189.64] [color={rgb, 255:red, 0; green, 0; blue, 0 }  ][fill={rgb, 255:red, 0; green, 0; blue, 0 }  ][line width=0.75]      (0, 0) circle [x radius= 3.35, y radius= 3.35]   ;
\draw [shift={(282.04,75.83)}, rotate = 189.64] [color={rgb, 255:red, 0; green, 0; blue, 0 }  ][fill={rgb, 255:red, 0; green, 0; blue, 0 }  ][line width=0.75]      (0, 0) circle [x radius= 3.35, y radius= 3.35]   ;
\draw    (406.99,232.5) -- (406.99,177.41) ;
\draw [shift={(406.99,177.41)}, rotate = 270] [color={rgb, 255:red, 0; green, 0; blue, 0 }  ][fill={rgb, 255:red, 0; green, 0; blue, 0 }  ][line width=0.75]      (0, 0) circle [x radius= 3.35, y radius= 3.35]   ;
\draw [shift={(406.99,232.5)}, rotate = 270] [color={rgb, 255:red, 0; green, 0; blue, 0 }  ][fill={rgb, 255:red, 0; green, 0; blue, 0 }  ][line width=0.75]      (0, 0) circle [x radius= 3.35, y radius= 3.35]   ;
\draw    (461.87,231.64) -- (406.99,232.5) ;
\draw [shift={(406.99,232.5)}, rotate = 179.1] [color={rgb, 255:red, 0; green, 0; blue, 0 }  ][fill={rgb, 255:red, 0; green, 0; blue, 0 }  ][line width=0.75]      (0, 0) circle [x radius= 3.35, y radius= 3.35]   ;
\draw [shift={(461.87,231.64)}, rotate = 179.1] [color={rgb, 255:red, 0; green, 0; blue, 0 }  ][fill={rgb, 255:red, 0; green, 0; blue, 0 }  ][line width=0.75]      (0, 0) circle [x radius= 3.35, y radius= 3.35]   ;
\draw    (567.4,231.85) -- (574.58,186.66) ;
\draw [shift={(574.58,186.66)}, rotate = 279.02] [color={rgb, 255:red, 0; green, 0; blue, 0 }  ][fill={rgb, 255:red, 0; green, 0; blue, 0 }  ][line width=0.75]      (0, 0) circle [x radius= 3.35, y radius= 3.35]   ;
\draw [shift={(567.4,231.85)}, rotate = 279.02] [color={rgb, 255:red, 0; green, 0; blue, 0 }  ][fill={rgb, 255:red, 0; green, 0; blue, 0 }  ][line width=0.75]      (0, 0) circle [x radius= 3.35, y radius= 3.35]   ;
\draw    (513.37,231.85) -- (461.87,231.64) ;
\draw [shift={(461.87,231.64)}, rotate = 180.24] [color={rgb, 255:red, 0; green, 0; blue, 0 }  ][fill={rgb, 255:red, 0; green, 0; blue, 0 }  ][line width=0.75]      (0, 0) circle [x radius= 3.35, y radius= 3.35]   ;
\draw [shift={(513.37,231.85)}, rotate = 180.24] [color={rgb, 255:red, 0; green, 0; blue, 0 }  ][fill={rgb, 255:red, 0; green, 0; blue, 0 }  ][line width=0.75]      (0, 0) circle [x radius= 3.35, y radius= 3.35]   ;
\draw    (567.4,231.85) -- (513.37,231.85) ;
\draw [shift={(513.37,231.85)}, rotate = 180] [color={rgb, 255:red, 0; green, 0; blue, 0 }  ][fill={rgb, 255:red, 0; green, 0; blue, 0 }  ][line width=0.75]      (0, 0) circle [x radius= 3.35, y radius= 3.35]   ;
\draw [shift={(567.4,231.85)}, rotate = 180] [color={rgb, 255:red, 0; green, 0; blue, 0 }  ][fill={rgb, 255:red, 0; green, 0; blue, 0 }  ][line width=0.75]      (0, 0) circle [x radius= 3.35, y radius= 3.35]   ;
\draw    (406.99,127.26) -- (406.99,177.41) ;
\draw [shift={(406.99,177.41)}, rotate = 90] [color={rgb, 255:red, 0; green, 0; blue, 0 }  ][fill={rgb, 255:red, 0; green, 0; blue, 0 }  ][line width=0.75]      (0, 0) circle [x radius= 3.35, y radius= 3.35]   ;
\draw [shift={(406.99,127.26)}, rotate = 90] [color={rgb, 255:red, 0; green, 0; blue, 0 }  ][fill={rgb, 255:red, 0; green, 0; blue, 0 }  ][line width=0.75]      (0, 0) circle [x radius= 3.35, y radius= 3.35]   ;
\draw    (406.57,75.18) -- (406.99,127.26) ;
\draw [shift={(406.99,127.26)}, rotate = 89.54] [color={rgb, 255:red, 0; green, 0; blue, 0 }  ][fill={rgb, 255:red, 0; green, 0; blue, 0 }  ][line width=0.75]      (0, 0) circle [x radius= 3.35, y radius= 3.35]   ;
\draw [shift={(406.57,75.18)}, rotate = 89.54] [color={rgb, 255:red, 0; green, 0; blue, 0 }  ][fill={rgb, 255:red, 0; green, 0; blue, 0 }  ][line width=0.75]      (0, 0) circle [x radius= 3.35, y radius= 3.35]   ;
\draw    (450.89,78.2) -- (406.57,75.18) ;
\draw [shift={(406.57,75.18)}, rotate = 183.89] [color={rgb, 255:red, 0; green, 0; blue, 0 }  ][fill={rgb, 255:red, 0; green, 0; blue, 0 }  ][line width=0.75]      (0, 0) circle [x radius= 3.35, y radius= 3.35]   ;
\draw [shift={(450.89,78.2)}, rotate = 183.89] [color={rgb, 255:red, 0; green, 0; blue, 0 }  ][fill={rgb, 255:red, 0; green, 0; blue, 0 }  ][line width=0.75]      (0, 0) circle [x radius= 3.35, y radius= 3.35]   ;
\draw    (499.02,66.58) -- (450.89,78.2) ;
\draw [shift={(450.89,78.2)}, rotate = 166.42] [color={rgb, 255:red, 0; green, 0; blue, 0 }  ][fill={rgb, 255:red, 0; green, 0; blue, 0 }  ][line width=0.75]      (0, 0) circle [x radius= 3.35, y radius= 3.35]   ;
\draw [shift={(499.02,66.58)}, rotate = 166.42] [color={rgb, 255:red, 0; green, 0; blue, 0 }  ][fill={rgb, 255:red, 0; green, 0; blue, 0 }  ][line width=0.75]      (0, 0) circle [x radius= 3.35, y radius= 3.35]   ;
\draw    (541.65,78.2) -- (499.02,66.58) ;
\draw [shift={(499.02,66.58)}, rotate = 195.25] [color={rgb, 255:red, 0; green, 0; blue, 0 }  ][fill={rgb, 255:red, 0; green, 0; blue, 0 }  ][line width=0.75]      (0, 0) circle [x radius= 3.35, y radius= 3.35]   ;
\draw [shift={(541.65,78.2)}, rotate = 195.25] [color={rgb, 255:red, 0; green, 0; blue, 0 }  ][fill={rgb, 255:red, 0; green, 0; blue, 0 }  ][line width=0.75]      (0, 0) circle [x radius= 3.35, y radius= 3.35]   ;
\draw    (575,82.5) -- (541.65,78.2) ;
\draw [shift={(541.65,78.2)}, rotate = 187.35] [color={rgb, 255:red, 0; green, 0; blue, 0 }  ][fill={rgb, 255:red, 0; green, 0; blue, 0 }  ][line width=0.75]      (0, 0) circle [x radius= 3.35, y radius= 3.35]   ;
\draw [shift={(575,82.5)}, rotate = 187.35] [color={rgb, 255:red, 0; green, 0; blue, 0 }  ][fill={rgb, 255:red, 0; green, 0; blue, 0 }  ][line width=0.75]      (0, 0) circle [x radius= 3.35, y radius= 3.35]   ;
\draw    (575,82.5) -- (574.58,134.58) ;
\draw [shift={(574.58,134.58)}, rotate = 90.46] [color={rgb, 255:red, 0; green, 0; blue, 0 }  ][fill={rgb, 255:red, 0; green, 0; blue, 0 }  ][line width=0.75]      (0, 0) circle [x radius= 3.35, y radius= 3.35]   ;
\draw [shift={(575,82.5)}, rotate = 90.46] [color={rgb, 255:red, 0; green, 0; blue, 0 }  ][fill={rgb, 255:red, 0; green, 0; blue, 0 }  ][line width=0.75]      (0, 0) circle [x radius= 3.35, y radius= 3.35]   ;
\draw    (574.58,186.66) -- (574.58,134.58) ;
\draw [shift={(574.58,134.58)}, rotate = 270] [color={rgb, 255:red, 0; green, 0; blue, 0 }  ][fill={rgb, 255:red, 0; green, 0; blue, 0 }  ][line width=0.75]      (0, 0) circle [x radius= 3.35, y radius= 3.35]   ;
\draw [shift={(574.58,186.66)}, rotate = 270] [color={rgb, 255:red, 0; green, 0; blue, 0 }  ][fill={rgb, 255:red, 0; green, 0; blue, 0 }  ][line width=0.75]      (0, 0) circle [x radius= 3.35, y radius= 3.35]   ;

\end{tikzpicture}
    \caption{An unevenly spaced polygonal curve (left) and the polygonal curve that results after one application of arclength respacing (right).}
    \label{fig:intro}
\end{figure}

In Section \ref{sec:method} we introduce the arclength respacing process for a polygonal curve and establish some of its key properties.  In Section \ref{sec:iteration} we consider iteration of arclength respacing and show in Theorem \ref{thm:main} that the iteration tends toward a limiting polygonal curve and that the limit is equilateral.  In Section \ref{sec:examples} we examine various examples. In Section $\ref{sec:application}$ we outline the implementation of arclength respacing and apply it to polygonal curves approximating a shape.

\section{The arclength respacing method} \label{sec:method}

Let $C$ be a polygonal curve with vertices $\{p_0, p_1, p_2, \ldots, p_m\}$.  Curves will be visualized in $\mathbb{R}^2$, but all discussion that follows applies equally to curves in $\mathbb{R}^n$.  We make no restrictions on the points that define $C$; for example, polygonal curves may be closed (have an overlapping startpoint and endpoint) or otherwise have self-intersections or overlapping vertices. Denote by $\displaystyle L(C) = \sum_{k=1}^m ||p_k - p_{k-1} ||$ the length of $C$. Let $P(s)$ be the linear interpolation of the vertices of $C$, parameterized by arclength $s$, such that $0 \leq s \leq L(C)$.  We now define the process that we will study in the following sections: the arclength respacing of a polygonal curve.

\begin{definition}
The \emph{arclength respacing} of the polygonal curve $C$ is the polygonal curve $f(C)$ with vertices $\{f(p_0), \ldots, f(p_m) \}$ evenly spaced by arclength along $C$,
\[
f(p_k) = P\left ( k \frac{L(C)}{m} \right), \quad k = 0, \ldots, m.
\]
\end{definition}

\noindent
An example result of arclength respacing is shown in Figure \ref{fig:respacing}.

\begin{figure}[ht]
    \centering
\tikzset{every picture/.style={line width=0.75pt}} 

\begin{tikzpicture}[x=0.75pt,y=0.75pt,yscale=-1,xscale=1]

\draw    (111.98,274.77) -- (114.54,164.89) ;
\draw [shift={(114.54,164.89)}, rotate = 271.34] [color={rgb, 255:red, 0; green, 0; blue, 0 }  ][fill={rgb, 255:red, 0; green, 0; blue, 0 }  ][line width=0.75]      (0, 0) circle [x radius= 3.35, y radius= 3.35]   ;
\draw [shift={(111.98,274.77)}, rotate = 271.34] [color={rgb, 255:red, 0; green, 0; blue, 0 }  ][fill={rgb, 255:red, 0; green, 0; blue, 0 }  ][line width=0.75]      (0, 0) circle [x radius= 3.35, y radius= 3.35]   ;
\draw    (114.54,164.89) -- (389,119) ;
\draw [shift={(389,119)}, rotate = 350.51] [color={rgb, 255:red, 0; green, 0; blue, 0 }  ][fill={rgb, 255:red, 0; green, 0; blue, 0 }  ][line width=0.75]      (0, 0) circle [x radius= 3.35, y radius= 3.35]   ;
\draw [shift={(114.54,164.89)}, rotate = 350.51] [color={rgb, 255:red, 0; green, 0; blue, 0 }  ][fill={rgb, 255:red, 0; green, 0; blue, 0 }  ][line width=0.75]      (0, 0) circle [x radius= 3.35, y radius= 3.35]   ;
\draw [color={rgb, 255:red, 0; green, 0; blue, 0 }  ,draw opacity=0.4 ][line width=1.5]  [dash pattern={on 5.63pt off 4.5pt}]  (111.98,274.77) -- (143.33,160.17) ;
\draw [shift={(111.98,274.77)}, rotate = 285.3] [color={rgb, 255:red, 0; green, 0; blue, 0 }  ,draw opacity=0.4 ][fill={rgb, 255:red, 0; green, 0; blue, 0 }  ,fill opacity=0.4 ][line width=1.5]      (0, 0) circle [x radius= 4.36, y radius= 4.36]   ;
\draw    (389,119) -- (441,168) ;
\draw [shift={(441,168)}, rotate = 43.3] [color={rgb, 255:red, 0; green, 0; blue, 0 }  ][fill={rgb, 255:red, 0; green, 0; blue, 0 }  ][line width=0.75]      (0, 0) circle [x radius= 3.35, y radius= 3.35]   ;
\draw [shift={(389,119)}, rotate = 43.3] [color={rgb, 255:red, 0; green, 0; blue, 0 }  ][fill={rgb, 255:red, 0; green, 0; blue, 0 }  ][line width=0.75]      (0, 0) circle [x radius= 3.35, y radius= 3.35]   ;
\draw    (441,168) -- (455,87) ;
\draw [shift={(455,87)}, rotate = 279.81] [color={rgb, 255:red, 0; green, 0; blue, 0 }  ][fill={rgb, 255:red, 0; green, 0; blue, 0 }  ][line width=0.75]      (0, 0) circle [x radius= 3.35, y radius= 3.35]   ;
\draw [shift={(441,168)}, rotate = 279.81] [color={rgb, 255:red, 0; green, 0; blue, 0 }  ][fill={rgb, 255:red, 0; green, 0; blue, 0 }  ][line width=0.75]      (0, 0) circle [x radius= 3.35, y radius= 3.35]   ;
\draw [color={rgb, 255:red, 0; green, 0; blue, 0 }  ,draw opacity=0.4 ][line width=1.5]  [dash pattern={on 5.63pt off 4.5pt}]  (143.33,160.17) -- (288.67,135.83) ;
\draw [shift={(288.67,135.83)}, rotate = 350.5] [color={rgb, 255:red, 0; green, 0; blue, 0 }  ,draw opacity=0.4 ][fill={rgb, 255:red, 0; green, 0; blue, 0 }  ,fill opacity=0.4 ][line width=1.5]      (0, 0) circle [x radius= 4.36, y radius= 4.36]   ;
\draw [shift={(143.33,160.17)}, rotate = 350.5] [color={rgb, 255:red, 0; green, 0; blue, 0 }  ,draw opacity=0.4 ][fill={rgb, 255:red, 0; green, 0; blue, 0 }  ,fill opacity=0.4 ][line width=1.5]      (0, 0) circle [x radius= 4.36, y radius= 4.36]   ;
\draw [color={rgb, 255:red, 0; green, 0; blue, 0 }  ,draw opacity=0.4 ][line width=1.5]  [dash pattern={on 5.63pt off 4.5pt}]  (288.67,135.83) -- (410,137) ;
\draw [color={rgb, 255:red, 0; green, 0; blue, 0 }  ,draw opacity=0.4 ][line width=1.5]  [dash pattern={on 5.63pt off 4.5pt}]  (410,137) -- (455,87) ;
\draw [shift={(455,87)}, rotate = 311.99] [color={rgb, 255:red, 0; green, 0; blue, 0 }  ,draw opacity=0.4 ][fill={rgb, 255:red, 0; green, 0; blue, 0 }  ,fill opacity=0.4 ][line width=1.5]      (0, 0) circle [x radius= 4.36, y radius= 4.36]   ;
\draw [shift={(410,137)}, rotate = 311.99] [color={rgb, 255:red, 0; green, 0; blue, 0 }  ,draw opacity=0.4 ][fill={rgb, 255:red, 0; green, 0; blue, 0 }  ,fill opacity=0.4 ][line width=1.5]      (0, 0) circle [x radius= 4.36, y radius= 4.36]   ;

\draw (79.52,265.2) node [anchor=north west][inner sep=0.75pt]    {$p_{0}$};
\draw (95.24,140.04) node [anchor=north west][inner sep=0.75pt]    {$p_{1}$};
\draw (277.6,105.08) node [anchor=north west][inner sep=0.75pt]    {$p_{2}$};
\draw (447.66,150.97) node [anchor=north west][inner sep=0.75pt]    {$p_{4}$};
\draw (379.73,91.67) node [anchor=north west][inner sep=0.75pt]    {$p_{3}$};
\draw (122.84,266.51) node [anchor=north west][inner sep=0.75pt]    {$f( p_{0})$};
\draw (153.06,167.74) node [anchor=north west][inner sep=0.75pt]    {$f( p_{1})$};
\draw (276.37,146.62) node [anchor=north west][inner sep=0.75pt]    {$f( p_{2})$};
\draw (467.03,82.62) node [anchor=north west][inner sep=0.75pt]    {$f( p_{4})$};
\draw (367.7,143.62) node [anchor=north west][inner sep=0.75pt]    {$f( p_{3})$};

\end{tikzpicture}

    \caption{A polygonal curve $C$ and its arclength respacing $f(C)$.}
    \label{fig:respacing}
\end{figure}
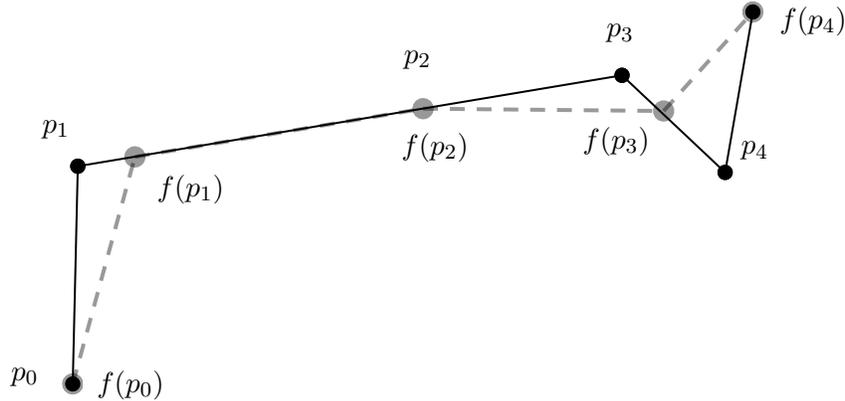

It is important to note for applications that arclength respacing can be performed computationally without costly numerical integration or function inversion typically needed to find an arclength parameterization.  The key idea is that piecewise linearity can be exploited to find $P(s)$.  Let $d_0 = 0$ and $\displaystyle d_k = \sum_{i=1}^{k} || p_{i}-p_{i-1} ||$ the arclength of $C$ up to vertex $d_k$ for $k=1, \ldots, m$.  Then, when $d_k \neq 0$, for $0 \leq t \leq 1$ and $k = 1, \ldots, m$, the parameterizations of segments
$$
P\big ( (1-t) d_{k-1} + t d_k \big) = (1-t) p_{k-1} + t p_{k}
$$
can be combined to provide an arclength parameterization $P(s)$ of $C$.  Thus it is possible to compute $f(C)$ from $C$ quickly using only linear interpolation.  Section \ref{sec:application} outlines this process in more detail.

Polygonal curves with different vertex sets can have the same arclength parameterized linear interpolation.  To account for this ambiguity, we introduce the notion of \textit{similar} polygonal curves.

\begin{definition}
A vertex $p_k$ of a polygonal curve is called a \emph{basic vertex} if it is not equal to the preceding vertex $p_{k-1}$ and does not lie on the line segment between its neighboring vertices $p_{k-1}$ and $p_{k+1}$.  Polygonal curves $P$ and $Q$ are \emph{similar}, $P \sim Q$, if they share the same sequence of basic vertices.  Note that the initial vertex $p_0$ is always basic.
\end{definition}
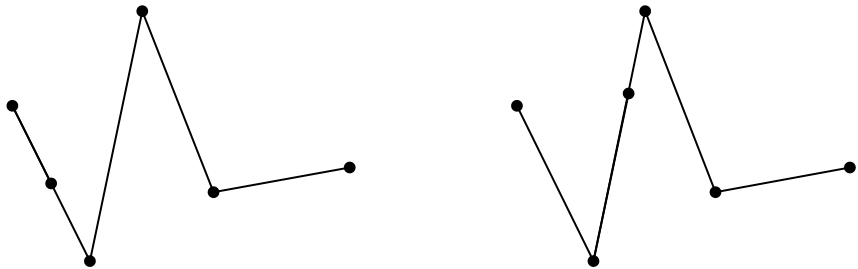
\begin{figure}[ht]
    \centering

\tikzset{every picture/.style={line width=0.75pt}} 

\begin{tikzpicture}[x=0.55pt,y=0.55pt,yscale=-1,xscale=1]

\draw    (139.34,53.5) -- (188.34,178.08) ;
\draw [shift={(139.34,53.5)}, rotate = 68.53] [color={rgb, 255:red, 0; green, 0; blue, 0 }  ][fill={rgb, 255:red, 0; green, 0; blue, 0 }  ][line width=0.75]      (0, 0) circle [x radius= 3.35, y radius= 3.35]   ;
\draw    (188.34,178.08) -- (282,161.09) ;
\draw [shift={(282,161.09)}, rotate = 349.72] [color={rgb, 255:red, 0; green, 0; blue, 0 }  ][fill={rgb, 255:red, 0; green, 0; blue, 0 }  ][line width=0.75]      (0, 0) circle [x radius= 3.35, y radius= 3.35]   ;
\draw [shift={(188.34,178.08)}, rotate = 349.72] [color={rgb, 255:red, 0; green, 0; blue, 0 }  ][fill={rgb, 255:red, 0; green, 0; blue, 0 }  ][line width=0.75]      (0, 0) circle [x radius= 3.35, y radius= 3.35]   ;
\draw    (50,118.62) -- (103.32,225.5) ;
\draw [shift={(50,118.62)}, rotate = 63.49] [color={rgb, 255:red, 0; green, 0; blue, 0 }  ][fill={rgb, 255:red, 0; green, 0; blue, 0 }  ][line width=0.75]      (0, 0) circle [x radius= 3.35, y radius= 3.35]   ;
\draw    (103.32,225.5) -- (139.34,53.5) ;
\draw [shift={(103.32,225.5)}, rotate = 281.83] [color={rgb, 255:red, 0; green, 0; blue, 0 }  ][fill={rgb, 255:red, 0; green, 0; blue, 0 }  ][line width=0.75]      (0, 0) circle [x radius= 3.35, y radius= 3.35]   ;
\draw    (485.19,53.5) -- (533.55,178.08) ;
\draw [shift={(485.19,53.5)}, rotate = 68.78] [color={rgb, 255:red, 0; green, 0; blue, 0 }  ][fill={rgb, 255:red, 0; green, 0; blue, 0 }  ][line width=0.75]      (0, 0) circle [x radius= 3.35, y radius= 3.35]   ;
\draw    (533.55,178.08) -- (626,161.09) ;
\draw [shift={(626,161.09)}, rotate = 349.59] [color={rgb, 255:red, 0; green, 0; blue, 0 }  ][fill={rgb, 255:red, 0; green, 0; blue, 0 }  ][line width=0.75]      (0, 0) circle [x radius= 3.35, y radius= 3.35]   ;
\draw [shift={(533.55,178.08)}, rotate = 349.59] [color={rgb, 255:red, 0; green, 0; blue, 0 }  ][fill={rgb, 255:red, 0; green, 0; blue, 0 }  ][line width=0.75]      (0, 0) circle [x radius= 3.35, y radius= 3.35]   ;
\draw    (397,118.62) -- (449.63,225.5) ;
\draw [shift={(397,118.62)}, rotate = 63.78] [color={rgb, 255:red, 0; green, 0; blue, 0 }  ][fill={rgb, 255:red, 0; green, 0; blue, 0 }  ][line width=0.75]      (0, 0) circle [x radius= 3.35, y radius= 3.35]   ;
\draw    (449.63,225.5) -- (485.19,53.5) ;
\draw [shift={(449.63,225.5)}, rotate = 281.68] [color={rgb, 255:red, 0; green, 0; blue, 0 }  ][fill={rgb, 255:red, 0; green, 0; blue, 0 }  ][line width=0.75]      (0, 0) circle [x radius= 3.35, y radius= 3.35]   ;
\draw    (50,118.62) -- (76.66,172.06) ;
\draw [shift={(76.66,172.06)}, rotate = 63.49] [color={rgb, 255:red, 0; green, 0; blue, 0 }  ][fill={rgb, 255:red, 0; green, 0; blue, 0 }  ][line width=0.75]      (0, 0) circle [x radius= 3.35, y radius= 3.35]   ;
\draw    (449.63,225.5) -- (473.81,110.13) ;
\draw [shift={(473.81,110.13)}, rotate = 281.84] [color={rgb, 255:red, 0; green, 0; blue, 0 }  ][fill={rgb, 255:red, 0; green, 0; blue, 0 }  ][line width=0.75]      (0, 0) circle [x radius= 3.35, y radius= 3.35]   ;

\end{tikzpicture}

    \caption{Two similar polygonal curves with different vertices.}
    \label{fig:similarpc}
\end{figure}

\begin{lemma} \label{lem:simlength}
Similar polygonal curves have the same arclength parameterized linear interpolation.
\end{lemma}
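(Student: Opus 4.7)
The plan is to prove the lemma by showing that any polygonal curve has the same arclength parameterized linear interpolation as the polygonal curve formed by its basic vertices alone. Since similar curves share the same sequence of basic vertices, this reduction produces a common curve, which yields equality of their parameterizations.

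First I would formalize what it means to remove a non-basic vertex. Suppose $C$ has vertices $\{p_0,\ldots,p_m\}$ and $p_k$ is non-basic with $1 \le k \le m-1$. Then either (i) $p_k = p_{k-1}$, or (ii) $p_k$ lies on the segment from $p_{k-1}$ to $p_{k+1}$. Let $C'$ be the curve with vertex sequence $\{p_0,\ldots,p_{k-1},p_{k+1},\ldots,p_m\}$ obtained by deleting $p_k$. I claim $P_C(s) = P_{C'}(s)$ for all $s$. In case (i), the segment from $p_{k-1}$ to $p_k$ has length zero, so the cumulative arclengths $d_i$ of $C$ and $C'$ agree for all indices (with a shift), and the image of $P_C$ restricted to each nontrivial segment matches that of $P_{C'}$. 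In case (ii), we have $\|p_{k-1}-p_k\|+\|p_k-p_{k+1}\|=\|p_{k-1}-p_{k+1}\|$ by collinearity and betweenness, so the two consecutive segments of $C$ together parametrize exactly the single segment of $C'$ between $p_{k-1}$ and $p_{k+1}$ at the same arclength values. In either case $L(C)=L(C')$ and $P_C \equiv P_{C'}$.

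Next I would iterate this removal. Starting from $C$, repeatedly delete non-basic interior vertices until only basic vertices remain; call the result $\widetilde C$. By the previous step and induction on the number of deletions, $P_C \equiv P_{\widetilde C}$. The endpoint $p_m$ can be handled by noting that if $p_m = p_{m-1}$ it contributes a zero-length tail that may also be dropped without affecting $P_C$; otherwise it is treated as a basic terminal vertex analogously to the initial vertex $p_0$.

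Finally, if $P \sim Q$ then $\widetilde P$ and $\widetilde Q$ are polygonal curves with identical vertex sequences, hence literally equal as curves, so $P_{\widetilde P} \equiv P_{\widetilde Q}$. Combined with $P_P \equiv P_{\widetilde P}$ and $P_Q \equiv P_{\widetilde Q}$ from the reduction step, we conclude $P_P \equiv P_Q$. The main subtlety, and what requires care rather than deep argument, is checking case (ii): verifying that ``lies on the line segment between its neighboring vertices'' in the definition really means lies between (not merely on the line through) $p_{k-1}$ and $p_{k+1}$, so that the arclength additivity used above is valid. Given the standard reading of ``line segment,'' this is immediate.
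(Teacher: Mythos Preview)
Your proof is correct and follows essentially the same approach as the paper: both argue that removing a non-basic vertex (either a repeated vertex or one lying on the segment between its neighbors) leaves the arclength parameterized linear interpolation unchanged, so the parameterization is determined solely by the basic vertices. Your version is more explicit about the iteration and the treatment of the terminal vertex, but the underlying argument is the same.
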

\begin{proof}
If $p_k$ is a non-basic vertex, either $p_{k-1} = p_k$, or $p_k$ lies in the image of the line segment connecting $p_{k-1}$ and $p_{k+1}$.   In either case, removing $p_k$ from the vertex set of $C$ will not affect the arclength parameterized linear interpolation (we  either remove a segment of length zero or replace two segments with one segment of combined length).  Thus the arclength parameterized linear interpolation is determined only by the basic vertices, and similar curves have the same basic vertices.
\end{proof}

We now investigate how the process of arclength respacing changes a polygonal curve.  In particular, we find in Proposition \ref{prop:equivprops} that similarity characterizes particular ways that the length and spacing of a polygonal curve change under arclength respacing.  We first prove a simple lemma about a more general respacing process we call \textit{oriented resampling}, of which arclength respacing is a special case.

\begin{definition} \label{def:orientedrespacing}
Let $C$ be a polygonal curve and $P(s)$, $0 \leq s \leq L(C)$, the associated arclength parameterized linear interpolation of the vertices of $C$.  Suppose we sample $m+1$ values of arclength $0=s_0 \leq s_1 \leq \cdots \leq s_{m-1} \leq s_m = L(C)$ and define a corresponding sample of points $q_0, \ldots, q_m$ along $C$ such that $q_k = P(s_k)$, for $k=0, \ldots, m$.  The polygonal curve $D$ defined by the vertex set ${q_0, \ldots, q_m}$ is called an \textit{oriented resampling} of $P$.
\end{definition}

\begin{lemma} \label{lem:length}
If $D$ is an oriented resampling of $C$, then $L(D) \leq L(C)$.  In particular, $L(f(C)) \leq L(C)$, so arclength respacing cannot increase the length of a polygonal curve.
\end{lemma}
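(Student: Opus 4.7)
The plan is to reduce everything to a pointwise contraction property of the arclength parameterization $P(s)$: namely, that for any $0 \le s \le t \le L(C)$,
\[
\|P(t)-P(s)\| \;\le\; t-s.
\]
Once this is established, the lemma follows immediately by summing over the resampled vertices of $D$: since $q_k = P(s_k)$ and $0 = s_0 \le s_1 \le \cdots \le s_m = L(C)$,
\[
L(D) \;=\; \sum_{k=1}^{m} \|q_k - q_{k-1}\| \;=\; \sum_{k=1}^{m} \|P(s_k)-P(s_{k-1})\| \;\le\; \sum_{k=1}^{m} (s_k - s_{k-1}) \;=\; L(C).
\]
The particular case $L(f(C)) \le L(C)$ is then immediate, since $f(C)$ is an oriented resampling with $s_k = kL(C)/m$.

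The main work, and the only real obstacle, is proving the pointwise bound $\|P(t)-P(s)\| \le t-s$. The clean way is to use the piecewise linear structure already exploited in the paper. If $s$ and $t$ lie in the same linear segment of $C$, say between $p_{k-1}$ and $p_k$, then $P$ restricted to that interval of arclength is an isometry onto the segment, so equality holds. In general, let $d_0, d_1, \ldots, d_m$ be the cumulative arclengths at the original vertices, and let $j \le k$ be the indices with $d_{j-1} \le s \le d_j$ and $d_{k-1} \le t \le d_k$. Inserting the intermediate vertices $p_j, p_{j+1}, \ldots, p_{k-1}$ and applying the triangle inequality gives
\[
\|P(t)-P(s)\| \;\le\; \|p_j - P(s)\| + \sum_{i=j+1}^{k-1}\|p_i - p_{i-1}\| + \|P(t) - p_{k-1}\|.
\]
Each of the three contributions equals the corresponding arclength increment ($d_j-s$, $d_{k-1}-d_j$, and $t-d_{k-1}$ respectively), by the same linear-interpolation identity used earlier; summing these gives exactly $t-s$.

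I expect the only subtlety to be bookkeeping in degenerate cases where $s=t$, or where $s$ or $t$ coincides with some $d_i$, or where a segment of $C$ has length zero so that $P$ is constant on a subinterval of arclength of length zero. These cases either reduce to trivial equalities or leave the telescoping argument unchanged, so they do not require separate treatment beyond a remark.
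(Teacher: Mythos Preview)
Your proof is correct and follows essentially the same approach as the paper: both argue that $\|q_k - q_{k-1}\| = \|P(s_k) - P(s_{k-1})\| \le s_k - s_{k-1}$ and then telescope the sum to $L(C)$. The only difference is that you spell out, via the triangle inequality across intermediate vertices, why this pointwise bound holds, whereas the paper simply asserts it as the statement that Euclidean distance is bounded by arclength along $C$.
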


\begin{proof}
We use the notation of Definition \ref{def:orientedrespacing}.  First observe that $||q_k - q_{k-1}|| \leq s_k - s_{k-1}$, since $s_k - s_{k-1}$  is the arclength distance between points $q_{k-1}$ and $q_k$ measured along $C$.
Thus
\[
\begin{split}
L(D) &= \sum_{k=1}^m ||q_k - q_{k-1} || \\
& \leq \sum_{k=1}^m s_k - s_{k-1} \\
& = s_m - s_0 = L(C).
\end{split}
\]
\end{proof}

\begin{proposition}  \label{prop:equivprops}
Let $C$ be a polygonal curve and $f(C)$ the arclength respacing of $C$.  The following properties are equivalent:
\begin{enumerate}
\item[(1)] $L(C) = L(f(C)),$
\item[(2)] $C \sim f(C),$
\item[(3)] $f(C)$ is equilateral.
\end{enumerate}
\end{proposition}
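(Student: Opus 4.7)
The plan is to prove $(1) \Leftrightarrow (3)$ and $(1) \Leftrightarrow (2)$, resting on a single refined observation: for each $k$, the chord length $\ell_k := ||f(p_k) - f(p_{k-1})||$ is at most the arclength gap $L(C)/m$ separating $f(p_{k-1})$ and $f(p_k)$ along $C$, with equality $\ell_k = L(C)/m$ if and only if the arclength-parametrized curve $P$ is linear on $[(k-1)L(C)/m,\, kL(C)/m]$---that is, if and only if no basic vertex of $C$ lies strictly inside the open arclength interval $\big((k-1)L(C)/m,\, kL(C)/m\big)$. I would establish this up front as a direct refinement of the per-segment estimate used in the proof of Lemma~\ref{lem:length}, together with the standard equality condition for the polygonal triangle inequality.

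For $(1) \Leftrightarrow (3)$: summing $\ell_k \leq L(C)/m$ gives $L(f(C)) = \sum_k \ell_k \leq L(C)$, and (1) forces equality term by term, so every $\ell_k = L(C)/m$ and $f(C)$ is equilateral. Conversely, suppose (3) holds with common edge length $\ell$. Then $\ell \leq L(C)/m$; if $\ell < L(C)/m$, the observation above forces each of the $m$ disjoint open intervals $\big((k-1)L(C)/m,\, kL(C)/m\big)$ to contain a basic interior vertex of $C$, yielding at least $m$ distinct interior basic vertices. But the interior vertices of $C$ form the set $\{p_1,\dots,p_{m-1}\}$, of size $m-1$---a contradiction. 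Hence $\ell = L(C)/m$, giving (1). This pigeonhole step is the main obstacle: equilaterality alone does not a priori pin the common edge length at $L(C)/m$, and the finiteness of $C$'s vertex set is what rules out a uniformly ``shrunk'' equilateral $f(C)$.

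For $(1) \Leftrightarrow (2)$: the implication $(2) \Rightarrow (1)$ is immediate from Lemma~\ref{lem:simlength}, since similar curves share an arclength-parametrized interpolation and therefore a length. For $(1) \Rightarrow (2)$, (1) again makes every sub-arc between consecutive $f$-vertices straight, so every basic interior vertex of $C$ occurs at an arclength of the form $kL(C)/m$ and hence appears as $f(p_k)$ in the vertex list of $f(C)$. A brief check of the incoming and outgoing directions at each $f(p_k)$ shows that $f(p_k)$ is basic in $f(C)$ exactly when $P$ bends at arclength $kL(C)/m$, which is exactly when $f(p_k)$ is a basic vertex of $C$; combined with $f(p_0) = p_0$ and $f(p_m) = p_m$, this identifies the basic vertex sequences of $C$ and $f(C)$, yielding $C \sim f(C)$.
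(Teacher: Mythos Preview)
Your argument is correct and follows a genuinely different path from the paper's. The paper closes the cycle via $(1)\Leftrightarrow(2)$ and $(2)\Leftrightarrow(3)$, while you prove $(1)\Leftrightarrow(3)$ directly and then handle $(1)\Leftrightarrow(2)$. Your organizing device---the sharpened per-edge inequality $\ell_k \le L(C)/m$ together with its equality condition---lets all four implications flow from a single observation, whereas the paper uses a separate construction for each direction. The contrast is sharpest at $(1)\Rightarrow(2)$: the paper argues the contrapositive by locating a vertex $p_k$ of $C$ that lies off $f(C)$, splicing the adjacent $f$-vertices $q_n,q_{n+1}$ into $C$ and excising the $C$-vertices between them to form an auxiliary curve $\overline C$ with $L(f(C))\le L(\overline C)<L(C)$; your direct route simply notes that equality in every $\ell_k\le L(C)/m$ forces $P$ to be linear on each subinterval, so $C$ and $f(C)$ share the arclength parameterization $P$, after which matching basic vertices is bookkeeping. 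For $(3)\Rightarrow(1)$ both proofs are pigeonhole but dually so: the paper places the $m+1$ vertices of $f(C)$ onto the $m$ segments of $C$ and finds two consecutive $f$-vertices on a single segment, which pins the common edge length at $L(C)/m$ immediately; you instead suppose $\ell<L(C)/m$ and force each of the $m$ arclength subintervals to swallow an interior vertex of $C$, overrunning the available $m-1$. Your approach is more unified and avoids the auxiliary $\overline C$; the paper's pigeonhole for $(3)\Rightarrow(1)$ is marginally quicker since it identifies the edge length constructively rather than by contradiction.
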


\begin{proof}

We first show $(1) \iff (2)$.  If $C \sim f(C)$ then $L(f(C)) = L(C)$ by Lemma \ref{lem:simlength}.  Suppose then that  $f(C) \not \sim C$.  There must then be a vertex $p_k$ of $C$ which is not equal to any vertex of $f(C)$, nor lies on a line segment connecting two consecutive vertices of $f(C)$.  Suppose that $q_n$ and $q_{n+1}$ are the vertices of $f(C)$ immediately preceding and following $p_k$ along $C$.  Let $p_l$ be the vertex of $C$ immediately preceding $q_n$ along $C$ and $p_r$ the vertex immediately following $q_{n+1}$ along $C$.  Let 
\[
\overline C = \{ p_0, \ldots, p_l, q_n, q_{n+1}, p_r, \ldots, p_m\}
\]
be the polygonal curve obtained from $C$ by adding the vertices $q_n, q_{n+1}$ to $C$ and deleting all vertices $p_{l+1},\ldots, p_k,\ldots, p_{r-1}$ between $q_n$ and $q_{n+1}$ along $C$.  Because these deleted vertices do not all lie along the line segment connecting $q_n$ and $q_{n+1}$ (in particular, $p_k$ does not), we have the strict inequality: $L(\overline C) < L(C)$.  Since the arclength respacing $f(C)$ is also an oriented resampling of $\overline C$, $L(f(C)) \leq L(\overline C)$ by Lemma \ref{lem:length}.  Combining these two inequalities gives $L(f(C)) < L(C)$.

We next show $(2) \iff (3)$.  Suppose first that $C\sim f(C)$. Then, by Lemma \ref{lem:simlength}, $C$ and $f(C)$ have the same arclength parameterized linear interpolation.  Thus a sampling of points evenly spaced along $C$ by arclength will also be evenly spaced by arclength along $f(C)$, so $f(C)$ will be equilateral.

Assume now that $f(C)$ is equilateral.  Because there are $m+1$ points in $f(C)$ and $m$ line segments in $C$, there must be a pair of consecutive vertices $q_k,q_{k+1}$ of $f(C)$ which lie on the same line segment of $C$.  Thus $||q_{k+1}-q_k|| = L(C)/m$.  Because $f(C)$ is equilateral, we also have $||q_{k+1}-q_k|| = L(f(C))/m$.  Thus $L(C) = L(f(C))$, and so $C \sim f(C)$ since we have already established that $(1) \implies (2)$.
\end{proof}

 We now examine what happens when the arclength respacing process is repeated.  First observe that, although the vertices of $f(C)$ are equally spaced by arclength  along $C$, they are not necessarily equally spaced by arclength along $f(C)$.  Thus $f(C)$ is not necessarily an equilateral polygonal curve.  Nevertheless, it appears that vertices of $f(C)$ are more evenly spaced than those of $C$ (as seen clearly in Figure \ref{fig:intro}). Thus, in an effort to obtain an equilateral curve, we can iterate the arclength respacing process, hoping to obtain polygonal curves that are increasingly close to being equilateral. The result of iterated arclength respacing is considered in the next section.

\section{Iteration of the respacing method}  \label{sec:iteration}

Let $C$ be a polygonal curve and denote by $C^n$ the $n^{th}$ iteration of the arclength respacing:  $C^n = f^n(C)$.  Lemma \ref{lem:length} yields an immediate observation about this iteration: the sequence of lengths of the iterated curves must converge.

\begin{lemma}
\label{lem:conv}
$L(C^n)$ converges as $n \to \infty$. 
\end{lemma}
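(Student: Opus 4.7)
The plan is to recognize this as an immediate consequence of the monotone convergence theorem for sequences of real numbers. By Lemma \ref{lem:length} applied to the polygonal curve $C^n$, we have
\[
L(C^{n+1}) = L(f(C^n)) \leq L(C^n),
\]
so the sequence $\{L(C^n)\}_{n \geq 0}$ is monotonically non-increasing. On the other hand, $L(C^n) \geq 0$ for every $n$, since each $L(C^n)$ is a sum of Euclidean distances. A monotone non-increasing sequence that is bounded below converges, which yields the claim.

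There is essentially no obstacle here: the only content is the invocation of Lemma \ref{lem:length} iteratively and the standard fact about bounded monotone sequences. One could briefly note that the limit $L^* := \lim_{n\to\infty} L(C^n)$ satisfies $0 \leq L^* \leq L(C)$, which may be useful for later arguments in Section \ref{sec:iteration} when characterizing the limiting curve.
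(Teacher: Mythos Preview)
Your proof is correct and essentially identical to the paper's: both invoke Lemma~\ref{lem:length} to get monotonicity and then apply the monotone convergence theorem. The only cosmetic difference is that the paper uses the sharper lower bound $\|p_m - p_0\|$ instead of $0$, but this plays no role in the argument.
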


\begin{proof} From Lemma \ref{lem:length}, the sequence $\left \{ L(C^n) \right\}$ is nonincreasing.  This sequence is also bounded below by the distance $||p_m-p_0||$ between the endpoints of $C$. (Note that this could be $0$ for a closed polygonal curve.) Thus $\left \{ L(C^n) \right\}$ is bounded and monotonic and must converge.
\end{proof}

As demonstrated in Proposition \ref{prop:equivprops}, arclength respacing yields an equilateral curve only in special cases.  However, arclength respacing generally produces a curve that appears to being equilateral (as seen in Figure \ref{fig:respacing}, for example).  More precisely, as the respacing is repeated, the iterates will converge to an equilateral curve.  We introduce an equivalent definition of equilateral that will be useful in proving this.

\begin{lemma} \label{lem:prebigtheorem}
$C  =\{p_0, \ldots, p_m \}$ is equilateral if and only if, for $k = 1, \ldots, m$,
\[
\frac{k L(C)}{m} = \sum_{i=1}^k ||p_i - p_{i-1}||.
\]
\end{lemma}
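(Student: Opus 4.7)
The plan is to prove both directions directly from the definition of equilateral, namely that every segment length equals $L(C)/m$.

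For the forward direction, assume $C$ is equilateral, so $\|p_i - p_{i-1}\| = L(C)/m$ for every $i=1,\ldots,m$. I would simply substitute this into the right-hand side:
\[
\sum_{i=1}^k \|p_i - p_{i-1}\| = \sum_{i=1}^k \frac{L(C)}{m} = \frac{kL(C)}{m},
\]
which gives the claimed identity for each $k$.

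For the reverse direction, assume the identity $\sum_{i=1}^k \|p_i - p_{i-1}\| = kL(C)/m$ holds for every $k=1,\ldots,m$. I would telescope: for each $k \geq 1$, subtracting the $(k-1)$-case from the $k$-case (where the $k=0$ case reads $0 = 0$) yields
\[
\|p_k - p_{k-1}\| = \frac{kL(C)}{m} - \frac{(k-1)L(C)}{m} = \frac{L(C)}{m}.
\]
Since this holds for every $k$, all consecutive edge lengths agree, so $C$ is equilateral.

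There is no real obstacle here; the statement is essentially a repackaging of the definition via partial sums of segment lengths. The only thing to be slightly careful about is handling the $k=1$ base case of the telescoping, which is immediate since the equation at $k=1$ reads $\|p_1 - p_0\| = L(C)/m$ directly.
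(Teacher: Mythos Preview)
Your proof is correct. The paper actually states this lemma without proof, treating it as an immediate reformulation of the definition; your argument supplies exactly the routine details (direct substitution for the forward direction, telescoping for the converse) that the authors left implicit.
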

\begin{theorem} \label{thm:main}  For any polygonal curve $C$, $\displaystyle \lim_{n \to \infty}f^n(C)$ exists and is an equilateral curve.
\end{theorem}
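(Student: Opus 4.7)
The plan is to prove the theorem in three stages: show that all segment lengths of $C^n$ tend to the common value $L^*/m$, extract a subsequential equilateral limit by compactness, and finally upgrade this to convergence of the full sequence.

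Write $C^n = \{q_0^n, \ldots, q_m^n\}$, so $q_0^n \equiv p_0$ and $q_m^n \equiv p_m$, and set $l_k^n = \|q_k^n - q_{k-1}^n\|$ and $L_n = L(C^n)$. By Lemma \ref{lem:conv}, $L_n \to L^*$ for some $L^* \geq 0$. Since $q_{k-1}^{n+1}$ and $q_k^{n+1}$ lie on $C^n$ separated by arclength exactly $L_n/m$, the chord-arc estimate from the proof of Lemma \ref{lem:length} gives $l_k^{n+1} \leq L_n/m$, and summing over $k$ yields
\[
L_n - L_{n+1} = \sum_{k=1}^m \bigl( L_n/m - l_k^{n+1} \bigr) \geq 0,
\]
a sum of nonnegative terms whose total vanishes. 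Thus each $l_k^{n+1} \to L^*/m$, and by Lemma \ref{lem:prebigtheorem} any vertex-wise subsequential limit of $\{C^n\}$ is equilateral. Compactness is automatic: every vertex sits within distance $L(C)$ of $p_0$, so the configurations live in a compact subset of $(\R^d)^{m+1}$ and every subsequence admits a convergent sub-subsequence.

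The final and hardest step is upgrading subsequential convergence to convergence of the full sequence. As a first observation, both $q_k^n$ and $q_k^{n+1}$ lie on $C^n$, at arclengths $\sum_{i=1}^k l_i^n$ and $kL_n/m$ respectively, so
\[
\|q_k^{n+1} - q_k^n\| \leq \Bigl| \tfrac{kL_n}{m} - \sum_{i=1}^k l_i^n \Bigr| \leq \sum_{i=1}^m \bigl| l_i^n - L_n/m \bigr| \to 0,
\]
and the accumulation set of $\{q_k^n\}$ is therefore connected. To collapse it to a single point I would introduce the Lyapunov functional $E(C^n) = \sum_k (l_k^n)^2$: chord-arc gives $E(C^{n+1}) \leq L_n^2/m$, Cauchy-Schwarz gives $L_n^2/m \leq E(C^n)$ with equality iff $C^n$ is equilateral, and the identity $E(C^n) - L_n^2/m = \sum_k (l_k^n - L_n/m)^2$ combined with the telescoping estimate $E(C^{n+1}) - L_{n+1}^2/m \leq (L_n^2 - L_{n+1}^2)/m$ yields $\sum_n \sum_k (l_k^n - L_n/m)^2 < \infty$. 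Converting this $\ell^2$ decay into a Cauchy estimate on the vertices themselves---so as to pin down a unique limit---is the main obstacle; I expect it to require either a Fej\'er-monotonicity/nonexpansiveness property of $f$ with respect to each equilateral fixed point, or a local contraction estimate for $f$ transverse to the manifold of equilateral configurations near the accumulation curve.
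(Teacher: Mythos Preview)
Your argument tracks the paper's almost exactly through the first two stages: the telescoping identity $L_n - L_{n+1} = \sum_k (L_n/m - l_k^{n+1})$ with each summand nonnegative gives $l_k^n \to L^*/m$, and both you and the paper then bound the vertex shift by the arclength discrepancy,
\[
\|q_k^{n+1} - q_k^n\| \;\le\; \Bigl|\,\tfrac{kL_n}{m} - \sum_{i=1}^k l_i^n\,\Bigr|.
\]
At this point the paper simply asserts that, since this bound tends to zero, the sequence $\{q_k^n\}$ converges. That inference is in fact a gap (consecutive differences tending to zero does not imply Cauchy), and you are right to flag it.

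However, the machinery you propose to close the gap---the Lyapunov functional $E$, Fej\'er monotonicity, a local contraction transverse to the equilateral manifold---is much heavier than what is needed, and your $\ell^2$ summability of the deviations $l_i^n - L_n/m$ does not by itself control the vertices. The missing observation is that the arclength discrepancy above is not merely $o(1)$ but \emph{summably} small. Since $l_i^n \le L_{n-1}/m$ for every $i$ (the chord--arc estimate applied at the previous step), one gets for each $k$
\[
\sum_{i=1}^k l_i^n - \tfrac{kL_n}{m} \;\le\; \tfrac{k}{m}(L_{n-1}-L_n),
\qquad
\sum_{i=1}^k l_i^n - \tfrac{kL_n}{m} \;\ge\; -\tfrac{m-k}{m}(L_{n-1}-L_n),
\]
the second inequality by applying the same upper bound to the complementary sum $\sum_{i>k} l_i^n = L_n - \sum_{i\le k} l_i^n$. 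Hence $\|q_k^{n+1}-q_k^n\| \le L_{n-1}-L_n$, which telescopes over $n$ to at most $L_0 - L^*$. The sequence $\{q_k^n\}$ is therefore Cauchy and converges; this single estimate replaces your entire third stage, and it also repairs the paper's own argument.
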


\begin{proof}
We will argue that, for each $p_k$, the sequence $\{f^n(p_k)\}_{n=0}^\infty$ converges, and thus that the limiting polygonal curve is given by $C^* = \{p^*_0, \ldots, p^*_m\}$, where $\displaystyle p_k^* = \lim_{n \rightarrow \infty} f^n(p_k)$.  For convenience, we will use the notation $p_k^n  = f^n(p_k)$ in what follows.

Let $\epsilon > 0$.  By Lemma \ref{lem:conv} there is an $N$ such that for all $n\geq N$,  
\begin{equation} \label{eq:ineq1}
L(C^{n})-\epsilon < L(C^{n+1}) \leq L(C^{n}).
\end{equation}
Consider the distance $||p^{n+1}_{k} - p^{n+1}_{k-1}||$.  This distance must satisify the inequality
\begin{equation} \label{eq:ineq2}
\frac{L(C^n)}{m}-\epsilon< ||p^{n+1}_{k} - p^{n+1}_{k-1}|| \leq \frac{L(C^n)}{m}
\end{equation}
for $n\geq N$ and $k =1, \ldots, m$.  The right side of this inequality is by construction,
since the points $p^{n+1}_k$ are chosen to be evenly spaced by arclength distance $\frac{L(C^n)}{m}$ along $C^n$.  For the left side, assume that there is some $k^*$ such that $ ||p^{n+1}_{k^*} - p^{n+1}_{k^*-1}|| \leq \frac{L(C^n)}{m}-\epsilon$.  Then
\[
\begin{split}
L(C^{n+1}) &= \sum_{k=1}^m ||p^{n+1}_{k}-p^{n+1}_{k-1} || \\
&= ||p^{n+1}_{k^*} - p^{n+1}_{k^*-1}|| + \sum_{k \neq k^*}||p^{n+1}_{k}-p^n_{k-1} ||\\
&\leq \frac{L(C^n)}{m}-\epsilon + (m-1)\frac{L(C^n)}{m}\\
&< L(C^{n+1}),
\end{split}
\]
where the last two inequalities follow from \eqref{eq:ineq1}.  This is a contradiction, and \eqref{eq:ineq2} follows.

Next, for $n\geq N$ and $k=1, \ldots, m$, consider the distance between points on consecutive iterates, $||p^{n+2}_k- p^{n+1}_k||$.  By construction, the point $p^{n+2}_k$ is placed on $C^{n+1}$ an arclength distance of $\frac{k L(C^{n+1})}{m}$ from $p_0^{n+1}$ along $C^{n+1}$.  By definition, the point $p^{n+1}_k$ lies an arclength distance of $\sum_{i=1}^k || p^{n+1}_i - p^{n+1}_{i-1} ||$ from $p_0^{n+1}$ along $C^{n+1}$.  Thus, the Euclidean distance $||p^{n+2}_k- p^{n+1}_k||$ is bounded by the difference of these arclength distances along $C^{n+1}$:

\begin{equation}
    ||p^{n+2}_k- p^{n+1}_k|| \leq \bigg| \frac{k L(C^{n+1})}{m} - \sum_{i=1}^k || p^{n+1}_i - p^{n+1}_{i-1} || \bigg|.
\end{equation}
With some rearrangment we find
\begin{equation} \label{eq:ineq3}
\begin{split}
\bigg| \frac{k L(C^{n+1})}{m} - \sum_{i=1}^k || p^{n+1}_i - p^{n+1}_{i-1} || \bigg| & = \bigg| \sum_{i=1}^k  \frac{ L(C^{n+1})}{m} - || p^{n+1}_i - p^{n+1}_{i-1} || \bigg| \\
& \leq \sum_{i=1}^k  \bigg|\frac{ L(C^{n+1})}{m} - || p^{n+1}_i - p^{n+1}_{i-1} || \bigg|.
\end{split}
\end{equation}
Now, by \eqref{eq:ineq1},
\[
\frac{L(C^n)}{m} - \epsilon  < \frac{L(C^{n+1})}{m} \leq \frac{L(C^n)}{m},
\]
which, combined with \eqref{eq:ineq2}, yields
\[
\bigg|\frac{ L(C^{n+1})}{m} - || p^{n+1}_i - p^{n+1}_{i-1} || \bigg|< \epsilon
\]
for $i = 1, \ldots, k.$ Thus
\begin{equation} \label{eq:ineqfinal}
\begin{split}
 ||p^{n+2}_k- p^{n+1}_k|| &\leq \sum_{i=1}^k  \bigg|\frac{ L(C^{n+1})}{m} - || p^{n+1}_i - p^{n+1}_{i-1} || \bigg| \\
 & <  k \epsilon.
\end{split}
\end{equation}
Since $m$ is fixed in the respacing process, this provides a uniform bound  $||p^{n+2}_k- p^{n+1}_k|| < m \epsilon$ for $k = 1, \ldots, m $ and $n \geq N$, and thus $\displaystyle \lim_{n\rightarrow \infty} p_k^n$ converges for $k = 0, \ldots, m$.  (The $k=0$ case is immediate since the initial point is always fixed.)  As before, call the limit $p_k^*$ and the limiting curve $C^* = \{p_0^*, \ldots, p_m^*\}$.

Finally, we show that $C^*$ is equilateral.  From \eqref{eq:ineq3} and \eqref{eq:ineqfinal} we see that, for $k = 1, \ldots, m$,
\[
\lim_{n\rightarrow \infty}
\bigg| \frac{k L(C^{n+1})}{m} - \sum_{i=1}^k || p^{n+1}_i - p^{n+1}_{i-1} || \bigg|  = 0.
\]
Passing the limit, we have
\[
\bigg| \frac{k L(C^*)}{m} - \sum_{i=1}^k || p^*_i - p^*_{i-1} || \bigg| = 0,
\]
and thus by Lemma \ref{lem:prebigtheorem}, $C^*$ is equilateral.

\end{proof}

\section{Examples}  \label{sec:examples}

Any given polygonal curve will limit to an equilateral curve, however it appears to be difficult in general to determine the limiting equilateral curve.  In this section we look at some examples where we know the limiting curve or can determine information about it.  We will continue to use the notation $p_k^n  = f^n(p_k)$ and $\displaystyle p_k^* = \lim_{n\rightarrow \infty} p_k^n$ from the proof of Theorem \ref{thm:main}.

\begin{eg}  There are polygonal curves which become equilateral precisely at iteration $n$.  Consider three colinear vertices $p_0, p_1, p_2$, with $p_2$ on the line segment connecting $p_0$ and $p_1$.  Let $d =||p_2-p_0||$. After each iteration, $p_0$ and $p_2$ remain fixed, and $p_1$ moves a distance $d/2$ closer to $p_0$. If $L(C) = n\, d$, then $C$ will become equilateral at precisely iteration $n$.  This example is illustrated in Figure \ref{fig:eq at n}.

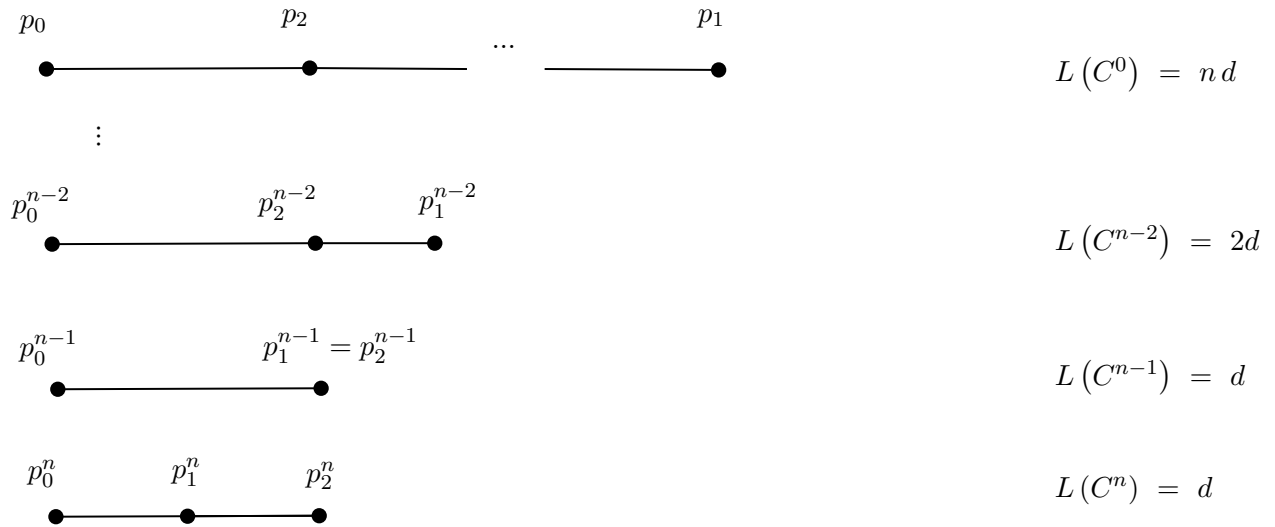
\begin{figure}[ht]
    \centering

\tikzset{every picture/.style={line width=0.75pt}} 

\begin{tikzpicture}[x=0.75pt,y=0.75pt,yscale=-1,xscale=1]

\draw    (35.84,137.43) -- (168.63,136.95) ;
\draw [shift={(168.63,136.95)}, rotate = 359.79] [color={rgb, 255:red, 0; green, 0; blue, 0 }  ][fill={rgb, 255:red, 0; green, 0; blue, 0 }  ][line width=0.75]      (0, 0) circle [x radius= 3.35, y radius= 3.35]   ;
\draw [shift={(35.84,137.43)}, rotate = 359.79] [color={rgb, 255:red, 0; green, 0; blue, 0 }  ][fill={rgb, 255:red, 0; green, 0; blue, 0 }  ][line width=0.75]      (0, 0) circle [x radius= 3.35, y radius= 3.35]   ;
\draw    (168.63,136.95) -- (228.82,136.95) ;
\draw [shift={(228.82,136.95)}, rotate = 0] [color={rgb, 255:red, 0; green, 0; blue, 0 }  ][fill={rgb, 255:red, 0; green, 0; blue, 0 }  ][line width=0.75]      (0, 0) circle [x radius= 3.35, y radius= 3.35]   ;
\draw    (38.7,210.67) -- (149,210.26) -- (171.5,210.18) ;
\draw [shift={(171.5,210.18)}, rotate = 359.79] [color={rgb, 255:red, 0; green, 0; blue, 0 }  ][fill={rgb, 255:red, 0; green, 0; blue, 0 }  ][line width=0.75]      (0, 0) circle [x radius= 3.35, y radius= 3.35]   ;
\draw [shift={(38.7,210.67)}, rotate = 359.79] [color={rgb, 255:red, 0; green, 0; blue, 0 }  ][fill={rgb, 255:red, 0; green, 0; blue, 0 }  ][line width=0.75]      (0, 0) circle [x radius= 3.35, y radius= 3.35]   ;
\draw    (37.75,275) -- (170.54,274.51) ;
\draw [shift={(170.54,274.51)}, rotate = 359.79] [color={rgb, 255:red, 0; green, 0; blue, 0 }  ][fill={rgb, 255:red, 0; green, 0; blue, 0 }  ][line width=0.75]      (0, 0) circle [x radius= 3.35, y radius= 3.35]   ;
\draw [shift={(37.75,275)}, rotate = 359.79] [color={rgb, 255:red, 0; green, 0; blue, 0 }  ][fill={rgb, 255:red, 0; green, 0; blue, 0 }  ][line width=0.75]      (0, 0) circle [x radius= 3.35, y radius= 3.35]   ;
\draw    (104.15,274.76) -- (170.54,274.51) ;
\draw [shift={(104.15,274.76)}, rotate = 359.79] [color={rgb, 255:red, 0; green, 0; blue, 0 }  ][fill={rgb, 255:red, 0; green, 0; blue, 0 }  ][line width=0.75]      (0, 0) circle [x radius= 3.35, y radius= 3.35]   ;
\draw    (32.97,49.04) -- (165.77,48.55) ;
\draw [shift={(165.77,48.55)}, rotate = 359.79] [color={rgb, 255:red, 0; green, 0; blue, 0 }  ][fill={rgb, 255:red, 0; green, 0; blue, 0 }  ][line width=0.75]      (0, 0) circle [x radius= 3.35, y radius= 3.35]   ;
\draw [shift={(32.97,49.04)}, rotate = 359.79] [color={rgb, 255:red, 0; green, 0; blue, 0 }  ][fill={rgb, 255:red, 0; green, 0; blue, 0 }  ][line width=0.75]      (0, 0) circle [x radius= 3.35, y radius= 3.35]   ;
\draw    (284.23,49.04) -- (372,49.5) ;
\draw [shift={(372,49.5)}, rotate = 0.3] [color={rgb, 255:red, 0; green, 0; blue, 0 }  ][fill={rgb, 255:red, 0; green, 0; blue, 0 }  ][line width=0.75]      (0, 0) circle [x radius= 3.35, y radius= 3.35]   ;
\draw    (165.77,48.55) -- (245.06,49.04) ;

\draw (14.11,107.37) node [anchor=north west][inner sep=0.75pt]    {$p_{0}^{n-2}$};
\draw (17.93,179.55) node [anchor=north west][inner sep=0.75pt]    {$p_{0}^{n-1}$};
\draw (22.04,244.83) node [anchor=north west][inner sep=0.75pt]    {$p_{0}^{n}$};
\draw (219.42,105.42) node [anchor=north west][inner sep=0.75pt]    {$p_{1}^{n-2}$};
\draw (138.5,106.37) node [anchor=north west][inner sep=0.75pt]    {$ p_{2}^{n-2}$};
\draw (140.76,178.65) node [anchor=north west][inner sep=0.75pt]    {$p_{1}^{n-1} = p_{2}^{n-1}$};
\draw (162.47,245.88) node [anchor=north west][inner sep=0.75pt]    {$p_{2}^{n}$};
\draw (94.65,243.96) node [anchor=north west][inner sep=0.75pt]    {$p_{1}^{n}$};
\draw (61.63,75.2) node [anchor=north west][inner sep=0.75pt]  [rotate=-90]  {$...$};
\draw (256.12,35.59) node [anchor=north west][inner sep=0.75pt]    {$...$};
\draw (540,41.4) node [anchor=north west][inner sep=0.75pt]    {$L\left( C^{0}\right) \ =\ n\, d$};
\draw (540,252.4) node [anchor=north west][inner sep=0.75pt]    {$L\left( C^{n}\right) \ =\ d$};
\draw (540,194.4) node [anchor=north west][inner sep=0.75pt]    {$L\left( C^{n-1}\right) \ =\ d$};
\draw (540,126.4) node [anchor=north west][inner sep=0.75pt]    {$L\left( C^{n-2}\right) \ =\ 2d$};
\draw (18.11,20.37) node [anchor=north west][inner sep=0.75pt]    {$p_{0}$};
\draw (360,17.37) node [anchor=north west][inner sep=0.75pt]    {$p_{1}$};
\draw (150.11,17.37) node [anchor=north west][inner sep=0.75pt]    {$p_{2}$};

\end{tikzpicture}

    \caption{A polygonal curve that becomes equilateral after $n$ iterations.}
    \label{fig:eq at n}
\end{figure}
\end{eg}

It is not necessary to have colinear vertices to produce an example which becomes equilateral after finitely many iterations. Figure \ref{fig:eveneq} shows a curve which becomes equilateral after exactly two iterations. It is an open question if it is possible to generalize an approach like the one shown in Figure \ref{fig:eveneq} to work for an arbitrary iteration $n$.

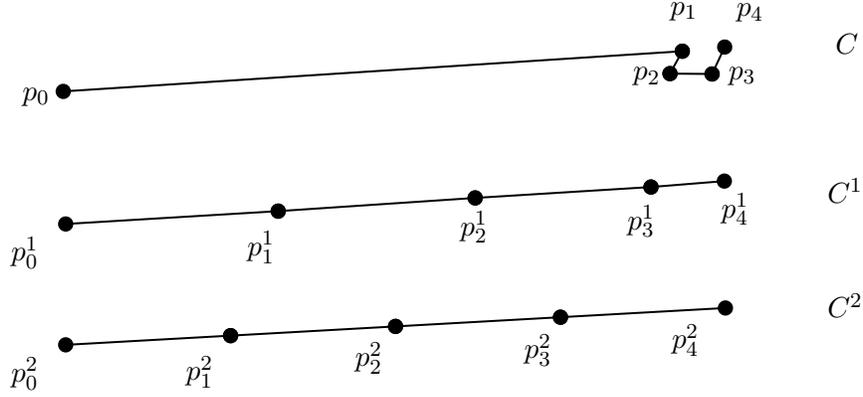
\begin{figure}[ht]
\centering
\tikzset{every picture/.style={line width=0.75pt}} 

\begin{tikzpicture}[x=0.75pt,y=0.75pt,yscale=-1,xscale=1]

\draw [color={rgb, 255:red, 0; green, 0; blue, 0 }  ,draw opacity=1 ][line width=0.75]    (432.8,89.45) -- (120.59,109.73) ;
\draw [shift={(120.59,109.73)}, rotate = 176.28] [color={rgb, 255:red, 0; green, 0; blue, 0 }  ,draw opacity=1 ][fill={rgb, 255:red, 0; green, 0; blue, 0 }  ,fill opacity=1 ][line width=0.75]      (0, 0) circle [x radius= 3.35, y radius= 3.35]   ;
\draw [shift={(432.8,89.45)}, rotate = 176.28] [color={rgb, 255:red, 0; green, 0; blue, 0 }  ,draw opacity=1 ][fill={rgb, 255:red, 0; green, 0; blue, 0 }  ,fill opacity=1 ][line width=0.75]      (0, 0) circle [x radius= 3.35, y radius= 3.35]   ;
\draw [color={rgb, 255:red, 0; green, 0; blue, 0 }  ,draw opacity=1 ][line width=0.75]    (454.18,87.34) -- (447.82,100.94) ;
\draw [shift={(447.82,100.94)}, rotate = 115.06] [color={rgb, 255:red, 0; green, 0; blue, 0 }  ,draw opacity=1 ][fill={rgb, 255:red, 0; green, 0; blue, 0 }  ,fill opacity=1 ][line width=0.75]      (0, 0) circle [x radius= 3.35, y radius= 3.35]   ;
\draw [shift={(454.18,87.34)}, rotate = 115.06] [color={rgb, 255:red, 0; green, 0; blue, 0 }  ,draw opacity=1 ][fill={rgb, 255:red, 0; green, 0; blue, 0 }  ,fill opacity=1 ][line width=0.75]      (0, 0) circle [x radius= 3.35, y radius= 3.35]   ;
\draw [color={rgb, 255:red, 0; green, 0; blue, 0 }  ,draw opacity=1 ][line width=0.75]    (447.82,100.94) -- (426.59,100.75) ;
\draw [shift={(426.59,100.75)}, rotate = 180.52] [color={rgb, 255:red, 0; green, 0; blue, 0 }  ,draw opacity=1 ][fill={rgb, 255:red, 0; green, 0; blue, 0 }  ,fill opacity=1 ][line width=0.75]      (0, 0) circle [x radius= 3.35, y radius= 3.35]   ;
\draw [shift={(447.82,100.94)}, rotate = 180.52] [color={rgb, 255:red, 0; green, 0; blue, 0 }  ,draw opacity=1 ][fill={rgb, 255:red, 0; green, 0; blue, 0 }  ,fill opacity=1 ][line width=0.75]      (0, 0) circle [x radius= 3.35, y radius= 3.35]   ;
\draw [color={rgb, 255:red, 0; green, 0; blue, 0 }  ,draw opacity=1 ][line width=0.75]    (426.59,100.75) -- (432.8,89.45) ;
\draw [shift={(432.8,89.45)}, rotate = 298.79] [color={rgb, 255:red, 0; green, 0; blue, 0 }  ,draw opacity=1 ][fill={rgb, 255:red, 0; green, 0; blue, 0 }  ,fill opacity=1 ][line width=0.75]      (0, 0) circle [x radius= 3.35, y radius= 3.35]   ;
\draw [shift={(426.59,100.75)}, rotate = 298.79] [color={rgb, 255:red, 0; green, 0; blue, 0 }  ,draw opacity=1 ][fill={rgb, 255:red, 0; green, 0; blue, 0 }  ,fill opacity=1 ][line width=0.75]      (0, 0) circle [x radius= 3.35, y radius= 3.35]   ;
\draw [color={rgb, 255:red, 0; green, 0; blue, 0 }  ,draw opacity=1 ][line width=0.75]    (229,170.17) -- (121.82,176.65) ;
\draw [shift={(121.82,176.65)}, rotate = 176.54] [color={rgb, 255:red, 0; green, 0; blue, 0 }  ,draw opacity=1 ][fill={rgb, 255:red, 0; green, 0; blue, 0 }  ,fill opacity=1 ][line width=0.75]      (0, 0) circle [x radius= 3.35, y radius= 3.35]   ;
\draw [shift={(229,170.17)}, rotate = 176.54] [color={rgb, 255:red, 0; green, 0; blue, 0 }  ,draw opacity=1 ][fill={rgb, 255:red, 0; green, 0; blue, 0 }  ,fill opacity=1 ][line width=0.75]      (0, 0) circle [x radius= 3.35, y radius= 3.35]   ;
\draw [color={rgb, 255:red, 0; green, 0; blue, 0 }  ,draw opacity=1 ][line width=0.75]    (454,155) -- (417,158) ;
\draw [shift={(417,158)}, rotate = 175.36] [color={rgb, 255:red, 0; green, 0; blue, 0 }  ,draw opacity=1 ][fill={rgb, 255:red, 0; green, 0; blue, 0 }  ,fill opacity=1 ][line width=0.75]      (0, 0) circle [x radius= 3.35, y radius= 3.35]   ;
\draw [shift={(454,155)}, rotate = 175.36] [color={rgb, 255:red, 0; green, 0; blue, 0 }  ,draw opacity=1 ][fill={rgb, 255:red, 0; green, 0; blue, 0 }  ,fill opacity=1 ][line width=0.75]      (0, 0) circle [x radius= 3.35, y radius= 3.35]   ;
\draw [color={rgb, 255:red, 0; green, 0; blue, 0 }  ,draw opacity=1 ][line width=0.75]    (328.33,163.5) -- (229,170.17) ;
\draw [shift={(229,170.17)}, rotate = 176.16] [color={rgb, 255:red, 0; green, 0; blue, 0 }  ,draw opacity=1 ][fill={rgb, 255:red, 0; green, 0; blue, 0 }  ,fill opacity=1 ][line width=0.75]      (0, 0) circle [x radius= 3.35, y radius= 3.35]   ;
\draw [shift={(328.33,163.5)}, rotate = 176.16] [color={rgb, 255:red, 0; green, 0; blue, 0 }  ,draw opacity=1 ][fill={rgb, 255:red, 0; green, 0; blue, 0 }  ,fill opacity=1 ][line width=0.75]      (0, 0) circle [x radius= 3.35, y radius= 3.35]   ;
\draw [color={rgb, 255:red, 0; green, 0; blue, 0 }  ,draw opacity=1 ][line width=0.75]    (417,158) -- (328.33,163.5) ;
\draw [shift={(328.33,163.5)}, rotate = 176.45] [color={rgb, 255:red, 0; green, 0; blue, 0 }  ,draw opacity=1 ][fill={rgb, 255:red, 0; green, 0; blue, 0 }  ,fill opacity=1 ][line width=0.75]      (0, 0) circle [x radius= 3.35, y radius= 3.35]   ;
\draw [shift={(417,158)}, rotate = 176.45] [color={rgb, 255:red, 0; green, 0; blue, 0 }  ,draw opacity=1 ][fill={rgb, 255:red, 0; green, 0; blue, 0 }  ,fill opacity=1 ][line width=0.75]      (0, 0) circle [x radius= 3.35, y radius= 3.35]   ;
\draw [color={rgb, 255:red, 0; green, 0; blue, 0 }  ,draw opacity=1 ][line width=0.75]    (205,233) -- (121.82,237.65) ;
\draw [shift={(121.82,237.65)}, rotate = 176.8] [color={rgb, 255:red, 0; green, 0; blue, 0 }  ,draw opacity=1 ][fill={rgb, 255:red, 0; green, 0; blue, 0 }  ,fill opacity=1 ][line width=0.75]      (0, 0) circle [x radius= 3.35, y radius= 3.35]   ;
\draw [shift={(205,233)}, rotate = 176.8] [color={rgb, 255:red, 0; green, 0; blue, 0 }  ,draw opacity=1 ][fill={rgb, 255:red, 0; green, 0; blue, 0 }  ,fill opacity=1 ][line width=0.75]      (0, 0) circle [x radius= 3.35, y radius= 3.35]   ;
\draw [color={rgb, 255:red, 0; green, 0; blue, 0 }  ,draw opacity=1 ][line width=0.75]    (454.55,219.05) -- (371.37,223.7) ;
\draw [shift={(371.37,223.7)}, rotate = 176.8] [color={rgb, 255:red, 0; green, 0; blue, 0 }  ,draw opacity=1 ][fill={rgb, 255:red, 0; green, 0; blue, 0 }  ,fill opacity=1 ][line width=0.75]      (0, 0) circle [x radius= 3.35, y radius= 3.35]   ;
\draw [shift={(454.55,219.05)}, rotate = 176.8] [color={rgb, 255:red, 0; green, 0; blue, 0 }  ,draw opacity=1 ][fill={rgb, 255:red, 0; green, 0; blue, 0 }  ,fill opacity=1 ][line width=0.75]      (0, 0) circle [x radius= 3.35, y radius= 3.35]   ;
\draw [color={rgb, 255:red, 0; green, 0; blue, 0 }  ,draw opacity=1 ][line width=0.75]    (371.37,223.7) -- (288.18,228.35) ;
\draw [shift={(288.18,228.35)}, rotate = 176.8] [color={rgb, 255:red, 0; green, 0; blue, 0 }  ,draw opacity=1 ][fill={rgb, 255:red, 0; green, 0; blue, 0 }  ,fill opacity=1 ][line width=0.75]      (0, 0) circle [x radius= 3.35, y radius= 3.35]   ;
\draw [shift={(371.37,223.7)}, rotate = 176.8] [color={rgb, 255:red, 0; green, 0; blue, 0 }  ,draw opacity=1 ][fill={rgb, 255:red, 0; green, 0; blue, 0 }  ,fill opacity=1 ][line width=0.75]      (0, 0) circle [x radius= 3.35, y radius= 3.35]   ;

\draw [color={rgb, 255:red, 0; green, 0; blue, 0 }  ,draw opacity=1 ][line width=0.75]    (288.18,228.35) -- (205,233) ;
\draw [shift={(205,233)}, rotate = 176.8] [color={rgb, 255:red, 0; green, 0; blue, 0 }  ,draw opacity=1 ][fill={rgb, 255:red, 0; green, 0; blue, 0 }  ,fill opacity=1 ][line width=0.75]      (0, 0) circle [x radius= 3.35, y radius= 3.35]   ;
\draw [shift={(288.18,228.35)}, rotate = 176.8] [color={rgb, 255:red, 0; green, 0; blue, 0 }  ,draw opacity=1 ][fill={rgb, 255:red, 0; green, 0; blue, 0 }  ,fill opacity=1 ][line width=0.75]      (0, 0) circle [x radius= 3.35, y radius= 3.35]   ;

\draw (98.52,106.2) node [anchor=north west][inner sep=0.75pt]    {$p_{0}$};
\draw (425.24,63.04) node [anchor=north west][inner sep=0.75pt]    {$p_{1}$};
\draw (406.6,96.08) node [anchor=north west][inner sep=0.75pt]    {$p_{2}$};
\draw (458.66,63.97) node [anchor=north west][inner sep=0.75pt]    {$p_{4}$};
\draw (454.73,95.67) node [anchor=north west][inner sep=0.75pt]    {$p_{3}$};
\draw (92.84,181.51) node [anchor=north west][inner sep=0.75pt]    {$p_{0}^1$};
\draw (212.06,178.74) node [anchor=north west][inner sep=0.75pt]    {$p_{1}^1$};
\draw (319.37,168.62) node [anchor=north west][inner sep=0.75pt]    {$p_{2}^1$};
\draw (451.03,159.62) node [anchor=north west][inner sep=0.75pt]    {$p_{4}^1$};
\draw (403.7,165.62) node [anchor=north west][inner sep=0.75pt]    {$p_{3}^1$};
\draw (92.84,242.51) node [anchor=north west][inner sep=0.75pt]    {$p_{0}^2$};
\draw (181.06,241.74) node [anchor=north west][inner sep=0.75pt]    {$p_{1}^2$};
\draw (266.37,234.62) node [anchor=north west][inner sep=0.75pt]    {$p_{2}^2$};
\draw (426.03,225.62) node [anchor=north west][inner sep=0.75pt]    {$p_{4}^2$};
\draw (351.7,230.62) node [anchor=north west][inner sep=0.75pt]    {$p_{3}^2$};
\draw (508.52,79.2) node [anchor=north west][inner sep=0.75pt]    {$C$};
\draw (504.52,210.2) node [anchor=north west][inner sep=0.75pt]    {$C^{2}$};
\draw (504.52,152.2) node [anchor=north west][inner sep=0.75pt]    {$C^{1}$};

\end{tikzpicture}

    \caption{A polygonal curve that becomes equilateral after two iterations.}
    \label{fig:eveneq}

\end{figure}

\begin{eg}  We next consider triangles, viewed as closed polygonal curves with four vertices $\{p_0,p_1,p_2,p_3=p_0\}$.  By Theorem \ref{thm:main}, the only possible limiting polygonal curves are a single point, or an equilateral triangle.  We consider starting curves which will realize either of these possibilities.

Consider an isosceles triangle $C$ with angle $\theta >  \frac{\pi}{3}$ at $p_0$, as shown in Figure \ref{fig:13}.  The points $p_1$ and $p_2$ will be mapped symmetrically under iteration to points on the side opposite to $p_0$.  The angle $\theta^n$ at $p_0^n$ will approach $\pi/3$ and $C$ will approach an equilateral triangle.

\begin{figure}[ht]
    \centering
 \tikzset{every picture/.style={line width=0.75pt}} 

\begin{tikzpicture}[x=0.75pt,y=0.75pt,yscale=-1,xscale=1]

\draw    (315.42,222.34) -- (99.04,85.98) ;
\draw [shift={(99.04,85.98)}, rotate = 212.22] [color={rgb, 255:red, 0; green, 0; blue, 0 }  ][fill={rgb, 255:red, 0; green, 0; blue, 0 }  ][line width=0.75]      (0, 0) circle [x radius= 3.35, y radius= 3.35]   ;
\draw [shift={(315.42,222.34)}, rotate = 212.22] [color={rgb, 255:red, 0; green, 0; blue, 0 }  ][fill={rgb, 255:red, 0; green, 0; blue, 0 }  ][line width=0.75]      (0, 0) circle [x radius= 3.35, y radius= 3.35]   ;
\draw    (535.86,86.61) -- (99.04,85.98) ;
\draw [shift={(99.04,85.98)}, rotate = 180.08] [color={rgb, 255:red, 0; green, 0; blue, 0 }  ][fill={rgb, 255:red, 0; green, 0; blue, 0 }  ][line width=0.75]      (0, 0) circle [x radius= 3.35, y radius= 3.35]   ;
\draw [shift={(535.86,86.61)}, rotate = 180.08] [color={rgb, 255:red, 0; green, 0; blue, 0 }  ][fill={rgb, 255:red, 0; green, 0; blue, 0 }  ][line width=0.75]      (0, 0) circle [x radius= 3.35, y radius= 3.35]   ;
\draw    (315.42,222.34) -- (535.86,86.61) ;
\draw [shift={(535.86,86.61)}, rotate = 328.38] [color={rgb, 255:red, 0; green, 0; blue, 0 }  ][fill={rgb, 255:red, 0; green, 0; blue, 0 }  ][line width=0.75]      (0, 0) circle [x radius= 3.35, y radius= 3.35]   ;
\draw [shift={(315.42,222.34)}, rotate = 328.38] [color={rgb, 255:red, 0; green, 0; blue, 0 }  ][fill={rgb, 255:red, 0; green, 0; blue, 0 }  ][line width=0.75]      (0, 0) circle [x radius= 3.35, y radius= 3.35]   ;
\draw    (101.39,67.66) -- (181.5,67.99) ;
\draw [shift={(184.5,68)}, rotate = 180.24] [fill={rgb, 255:red, 0; green, 0; blue, 0 }  ][line width=0.08]  [draw opacity=0] (10.72,-5.15) -- (0,0) -- (10.72,5.15) -- (7.12,0) -- cycle    ;
\draw [color={rgb, 255:red, 0; green, 0; blue, 0 }  ,draw opacity=0.4 ][line width=1.5]  [dash pattern={on 5.63pt off 4.5pt}]  (315.42,222.34) -- (401.01,86.78) ;
\draw [shift={(401.01,86.78)}, rotate = 302.27] [color={rgb, 255:red, 0; green, 0; blue, 0 }  ,draw opacity=0.4 ][fill={rgb, 255:red, 0; green, 0; blue, 0 }  ,fill opacity=0.4 ][line width=1.5]      (0, 0) circle [x radius= 4.36, y radius= 4.36]   ;
\draw [shift={(315.42,222.34)}, rotate = 302.27] [color={rgb, 255:red, 0; green, 0; blue, 0 }  ,draw opacity=0.4 ][fill={rgb, 255:red, 0; green, 0; blue, 0 }  ,fill opacity=0.4 ][line width=1.5]      (0, 0) circle [x radius= 4.36, y radius= 4.36]   ;
\draw [color={rgb, 255:red, 0; green, 0; blue, 0 }  ,draw opacity=0.4 ][line width=1.5]  [dash pattern={on 5.63pt off 4.5pt}]  (236.38,86.78) -- (315.42,222.34) ;
\draw [shift={(315.42,222.34)}, rotate = 59.75] [color={rgb, 255:red, 0; green, 0; blue, 0 }  ,draw opacity=0.4 ][fill={rgb, 255:red, 0; green, 0; blue, 0 }  ,fill opacity=0.4 ][line width=1.5]      (0, 0) circle [x radius= 4.36, y radius= 4.36]   ;
\draw [shift={(236.38,86.78)}, rotate = 59.75] [color={rgb, 255:red, 0; green, 0; blue, 0 }  ,draw opacity=0.4 ][fill={rgb, 255:red, 0; green, 0; blue, 0 }  ,fill opacity=0.4 ][line width=1.5]      (0, 0) circle [x radius= 4.36, y radius= 4.36]   ;
\draw    (532.21,66.66) -- (456.5,66.02) ;
\draw [shift={(453.5,66)}, rotate = 360.48] [fill={rgb, 255:red, 0; green, 0; blue, 0 }  ][line width=0.08]  [draw opacity=0] (10.72,-5.15) -- (0,0) -- (10.72,5.15) -- (7.12,0) -- cycle    ;
\draw [color={rgb, 255:red, 0; green, 0; blue, 0 }  ,draw opacity=0.4 ]   (158,86.33) -- (315.95,222.59) ;
\draw [color={rgb, 255:red, 0; green, 0; blue, 0 }  ,draw opacity=0.3 ]   (198.33,86.33) -- (315.95,222.59) ;
\draw [color={rgb, 255:red, 0; green, 0; blue, 0 }  ,draw opacity=0.2 ][fill={rgb, 255:red, 0; green, 0; blue, 0 }  ,fill opacity=0.2 ]   (218.33,86.67) -- (315.95,222.59) ;
\draw [color={rgb, 255:red, 0; green, 0; blue, 0 }  ,draw opacity=0.1 ][fill={rgb, 255:red, 0; green, 0; blue, 0 }  ,fill opacity=0.2 ]   (228.67,86.67) -- (315.42,222.34) ;
\draw [color={rgb, 255:red, 0; green, 0; blue, 0 }  ,draw opacity=0.4 ]   (476.5,86) -- (316.28,222.26) ;
\draw [color={rgb, 255:red, 0; green, 0; blue, 0 }  ,draw opacity=0.3 ]   (435.59,86) -- (316.28,222.26) ;
\draw [color={rgb, 255:red, 0; green, 0; blue, 0 }  ,draw opacity=0.2 ][fill={rgb, 255:red, 0; green, 0; blue, 0 }  ,fill opacity=0.2 ]   (415.3,86.33) -- (316.28,222.26) ;
\draw [color={rgb, 255:red, 0; green, 0; blue, 0 }  ,draw opacity=0.1 ][fill={rgb, 255:red, 0; green, 0; blue, 0 }  ,fill opacity=0.2 ]   (404.82,86.33) -- (316.81,222.01) ;
\draw [color={rgb, 255:red, 0; green, 0; blue, 0 }  ,draw opacity=0.4 ][line width=1.5]  [dash pattern={on 5.63pt off 4.5pt}]  (236.38,86.78) -- (401.01,86.78) ;
\draw [shift={(401.01,86.78)}, rotate = 0] [color={rgb, 255:red, 0; green, 0; blue, 0 }  ,draw opacity=0.4 ][fill={rgb, 255:red, 0; green, 0; blue, 0 }  ,fill opacity=0.4 ][line width=1.5]      (0, 0) circle [x radius= 4.36, y radius= 4.36]   ;
\draw [shift={(236.38,86.78)}, rotate = 0] [color={rgb, 255:red, 0; green, 0; blue, 0 }  ,draw opacity=0.4 ][fill={rgb, 255:red, 0; green, 0; blue, 0 }  ,fill opacity=0.4 ][line width=1.5]      (0, 0) circle [x radius= 4.36, y radius= 4.36]   ;

\draw (308.84,229.51) node [anchor=north west][inner sep=0.75pt]    {$p_{0}$};
\draw (82.69,58.05) node [anchor=north west][inner sep=0.75pt]    {$p_{1}$};
\draw (539.03,55.28) node [anchor=north west][inner sep=0.75pt]    {$p_{2}$};
\draw (219.73,52.74) node [anchor=north west][inner sep=0.75pt]    {$p_{1}^*$};
\draw (388.73,53.74) node [anchor=north west][inner sep=0.75pt]    {$p_{2}^*$};

\end{tikzpicture}

    \caption{Arclength respacing iteration for an isosceles triangle with angle $\theta > \frac{\pi}{3}$ at $p_0$ converges to an equilateral triangle.}
    \label{fig:13}
\end{figure}
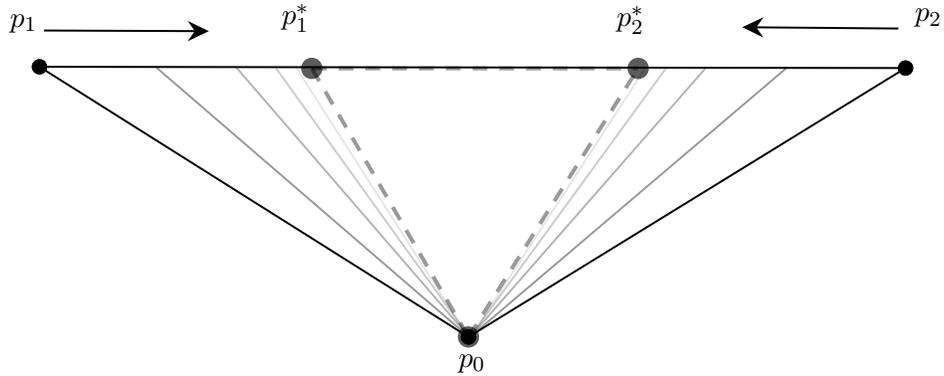

Next consider a triangle $C$ with an angle $\theta < \frac{\pi}{3}$ at the starting vertex $p_0$.  By construction, this angle does not increase under iteration, so $\theta^{n+1} \leq \theta^{n}$, where $\theta^n$ is the angle at the vertex $p_0^n$ of $C^n$.  Thus $C^n$ cannot approach an equilateral triangle, and must converge to a point.  A special case of this is the iteration of an isosceles triangle with angle $\theta < \frac{\pi}{3}$ at $p_0$, which will produce a sequence of similar triangles shrinking to a point, as shown in Figure \ref{fig:12}.

\begin{figure}[ht]
    \centering
\tikzset{every picture/.style={line width=0.75pt}} 

\begin{tikzpicture}[x=0.75pt,y=0.75pt,yscale=-1,xscale=1]

\draw    (320.35,257) -- (251.5,50.33) ;
\draw [shift={(251.5,50.33)}, rotate = 251.57] [color={rgb, 255:red, 0; green, 0; blue, 0 }  ][fill={rgb, 255:red, 0; green, 0; blue, 0 }  ][line width=0.75]      (0, 0) circle [x radius= 3.35, y radius= 3.35]   ;
\draw [shift={(320.35,257)}, rotate = 251.57] [color={rgb, 255:red, 0; green, 0; blue, 0 }  ][fill={rgb, 255:red, 0; green, 0; blue, 0 }  ][line width=0.75]      (0, 0) circle [x radius= 3.35, y radius= 3.35]   ;
\draw    (390.5,51.28) -- (251.5,50.33) ;
\draw [shift={(251.5,50.33)}, rotate = 180.39] [color={rgb, 255:red, 0; green, 0; blue, 0 }  ][fill={rgb, 255:red, 0; green, 0; blue, 0 }  ][line width=0.75]      (0, 0) circle [x radius= 3.35, y radius= 3.35]   ;
\draw [shift={(390.5,51.28)}, rotate = 180.39] [color={rgb, 255:red, 0; green, 0; blue, 0 }  ][fill={rgb, 255:red, 0; green, 0; blue, 0 }  ][line width=0.75]      (0, 0) circle [x radius= 3.35, y radius= 3.35]   ;
\draw    (320.35,257) -- (390.5,51.28) ;
\draw [shift={(390.5,51.28)}, rotate = 288.83] [color={rgb, 255:red, 0; green, 0; blue, 0 }  ][fill={rgb, 255:red, 0; green, 0; blue, 0 }  ][line width=0.75]      (0, 0) circle [x radius= 3.35, y radius= 3.35]   ;
\draw [shift={(320.35,257)}, rotate = 288.83] [color={rgb, 255:red, 0; green, 0; blue, 0 }  ][fill={rgb, 255:red, 0; green, 0; blue, 0 }  ][line width=0.75]      (0, 0) circle [x radius= 3.35, y radius= 3.35]   ;
\draw [color={rgb, 255:red, 0; green, 0; blue, 0 }  ,draw opacity=0.6 ]   (367,121.62) -- (274.17,120.83) ;
\draw [shift={(274.17,120.83)}, rotate = 180.49] [color={rgb, 255:red, 0; green, 0; blue, 0 }  ,draw opacity=0.6 ][fill={rgb, 255:red, 0; green, 0; blue, 0 }  ,fill opacity=0.6 ][line width=0.75]      (0, 0) circle [x radius= 3.35, y radius= 3.35]   ;
\draw [shift={(367,121.62)}, rotate = 180.49] [color={rgb, 255:red, 0; green, 0; blue, 0 }  ,draw opacity=0.6 ][fill={rgb, 255:red, 0; green, 0; blue, 0 }  ,fill opacity=0.6 ][line width=0.75]      (0, 0) circle [x radius= 3.35, y radius= 3.35]   ;
\draw [color={rgb, 255:red, 0; green, 0; blue, 0 }  ,draw opacity=0.5 ]   (353,162.72) -- (288.26,162.83) ;
\draw [shift={(288.26,162.83)}, rotate = 179.9] [color={rgb, 255:red, 0; green, 0; blue, 0 }  ,draw opacity=0.5 ][fill={rgb, 255:red, 0; green, 0; blue, 0 }  ,fill opacity=0.5 ][line width=0.75]      (0, 0) circle [x radius= 3.35, y radius= 3.35]   ;
\draw [shift={(353,162.72)}, rotate = 179.9] [color={rgb, 255:red, 0; green, 0; blue, 0 }  ,draw opacity=0.5 ][fill={rgb, 255:red, 0; green, 0; blue, 0 }  ,fill opacity=0.5 ][line width=0.75]      (0, 0) circle [x radius= 3.35, y radius= 3.35]   ;
\draw [color={rgb, 255:red, 0; green, 0; blue, 0 }  ,draw opacity=0.4 ]   (345,186.75) -- (297.52,186.91) ;
\draw [shift={(297.52,186.91)}, rotate = 179.81] [color={rgb, 255:red, 0; green, 0; blue, 0 }  ,draw opacity=0.4 ][fill={rgb, 255:red, 0; green, 0; blue, 0 }  ,fill opacity=0.4 ][line width=0.75]      (0, 0) circle [x radius= 3.35, y radius= 3.35]   ;
\draw [shift={(345,186.75)}, rotate = 179.81] [color={rgb, 255:red, 0; green, 0; blue, 0 }  ,draw opacity=0.4 ][fill={rgb, 255:red, 0; green, 0; blue, 0 }  ,fill opacity=0.4 ][line width=0.75]      (0, 0) circle [x radius= 3.35, y radius= 3.35]   ;
\draw [color={rgb, 255:red, 0; green, 0; blue, 0 }  ,draw opacity=0.3 ]   (338.33,203.82) -- (303.93,203.93) ;
\draw [shift={(303.93,203.93)}, rotate = 179.81] [color={rgb, 255:red, 0; green, 0; blue, 0 }  ,draw opacity=0.3 ][fill={rgb, 255:red, 0; green, 0; blue, 0 }  ,fill opacity=0.3 ][line width=0.75]      (0, 0) circle [x radius= 3.35, y radius= 3.35]   ;
\draw [shift={(338.33,203.82)}, rotate = 179.81] [color={rgb, 255:red, 0; green, 0; blue, 0 }  ,draw opacity=0.3 ][fill={rgb, 255:red, 0; green, 0; blue, 0 }  ,fill opacity=0.3 ][line width=0.75]      (0, 0) circle [x radius= 3.35, y radius= 3.35]   ;
\draw [color={rgb, 255:red, 0; green, 0; blue, 0 }  ,draw opacity=0.15 ]   (328.33,233.54) -- (312.93,233.33) ;
\draw [shift={(312.93,233.33)}, rotate = 180.76] [color={rgb, 255:red, 0; green, 0; blue, 0 }  ,draw opacity=0.15 ][fill={rgb, 255:red, 0; green, 0; blue, 0 }  ,fill opacity=0.15 ][line width=0.75]      (0, 0) circle [x radius= 3.35, y radius= 3.35]   ;
\draw [shift={(328.33,233.54)}, rotate = 180.76] [color={rgb, 255:red, 0; green, 0; blue, 0 }  ,draw opacity=0.15 ][fill={rgb, 255:red, 0; green, 0; blue, 0 }  ,fill opacity=0.15 ][line width=0.75]      (0, 0) circle [x radius= 3.35, y radius= 3.35]   ;
\draw [color={rgb, 255:red, 0; green, 0; blue, 0 }  ,draw opacity=0.2 ]   (333.67,218.99) -- (307.93,219.11) ;
\draw [shift={(307.93,219.11)}, rotate = 179.75] [color={rgb, 255:red, 0; green, 0; blue, 0 }  ,draw opacity=0.2 ][fill={rgb, 255:red, 0; green, 0; blue, 0 }  ,fill opacity=0.2 ][line width=0.75]      (0, 0) circle [x radius= 3.35, y radius= 3.35]   ;
\draw [shift={(333.67,218.99)}, rotate = 179.75] [color={rgb, 255:red, 0; green, 0; blue, 0 }  ,draw opacity=0.2 ][fill={rgb, 255:red, 0; green, 0; blue, 0 }  ,fill opacity=0.2 ][line width=0.75]      (0, 0) circle [x radius= 3.35, y radius= 3.35]   ;
\draw [color={rgb, 255:red, 0; green, 0; blue, 0 }  ,draw opacity=0.1 ]   (326.33,240.49) -- (314.93,240.92) ;
\draw [shift={(314.93,240.92)}, rotate = 177.85] [color={rgb, 255:red, 0; green, 0; blue, 0 }  ,draw opacity=0.1 ][fill={rgb, 255:red, 0; green, 0; blue, 0 }  ,fill opacity=0.1 ][line width=0.75]      (0, 0) circle [x radius= 3.35, y radius= 3.35]   ;
\draw [shift={(326.33,240.49)}, rotate = 177.85] [color={rgb, 255:red, 0; green, 0; blue, 0 }  ,draw opacity=0.1 ][fill={rgb, 255:red, 0; green, 0; blue, 0 }  ,fill opacity=0.1 ][line width=0.75]      (0, 0) circle [x radius= 3.35, y radius= 3.35]   ;
\draw [color={rgb, 255:red, 0; green, 0; blue, 0 }  ,draw opacity=0.05 ]   (323.67,247.45) -- (315.93,247.56) ;
\draw [shift={(315.93,247.56)}, rotate = 179.17] [color={rgb, 255:red, 0; green, 0; blue, 0 }  ,draw opacity=0.05 ][fill={rgb, 255:red, 0; green, 0; blue, 0 }  ,fill opacity=0.05 ][line width=0.75]      (0, 0) circle [x radius= 3.35, y radius= 3.35]   ;
\draw [shift={(323.67,247.45)}, rotate = 179.17] [color={rgb, 255:red, 0; green, 0; blue, 0 }  ,draw opacity=0.05 ][fill={rgb, 255:red, 0; green, 0; blue, 0 }  ,fill opacity=0.05 ][line width=0.75]      (0, 0) circle [x radius= 3.35, y radius= 3.35]   ;
\draw [color={rgb, 255:red, 0; green, 0; blue, 0 }  ,draw opacity=0.02 ]   (322.75,251.28) -- (317.93,251.35) ;
\draw [shift={(317.93,251.35)}, rotate = 179.13] [color={rgb, 255:red, 0; green, 0; blue, 0 }  ,draw opacity=0.02 ][fill={rgb, 255:red, 0; green, 0; blue, 0 }  ,fill opacity=0.02 ][line width=0.75]      (0, 0) circle [x radius= 3.35, y radius= 3.35]   ;
\draw [shift={(322.75,251.28)}, rotate = 179.13] [color={rgb, 255:red, 0; green, 0; blue, 0 }  ,draw opacity=0.02 ][fill={rgb, 255:red, 0; green, 0; blue, 0 }  ,fill opacity=0.02 ][line width=0.75]      (0, 0) circle [x radius= 3.35, y radius= 3.35]   ;
\draw    (243.5,59) -- (262.66,113.1) ;
\draw [shift={(263.67,115.93)}, rotate = 250.49] [fill={rgb, 255:red, 0; green, 0; blue, 0 }  ][line width=0.08]  [draw opacity=0] (10.72,-5.15) -- (0,0) -- (10.72,5.15) -- (7.12,0) -- cycle    ;
\draw    (403,59.29) -- (383.52,113.18) ;
\draw [shift={(382.5,116)}, rotate = 289.87] [fill={rgb, 255:red, 0; green, 0; blue, 0 }  ][line width=0.08]  [draw opacity=0] (10.72,-5.15) -- (0,0) -- (10.72,5.15) -- (7.12,0) -- cycle    ;

\draw (311.5,29.68) node [anchor=north west][inner sep=0.75pt]    {$C$};
\draw (314.96,102.68) node [anchor=north west][inner sep=0.75pt]    {$C^{1}$};
\draw (311.96,142.09) node [anchor=north west][inner sep=0.75pt]    {$C^{2}$};
\draw (313.3,168.17) node [anchor=north west][inner sep=0.75pt]    {$C^{3}$};
\draw (304.63,265.8) node [anchor=north west][inner sep=0.75pt]    {$p_{0}$};
\draw (222.63,41.8) node [anchor=north west][inner sep=0.75pt]    {$p_{1}$};
\draw (397.63,35.8) node [anchor=north west][inner sep=0.75pt]    {$p_{2}$};
\draw (240.63,118.8) node [anchor=north west][inner sep=0.75pt]    {$p_{1}^1$};
\draw (381,117.02) node [anchor=north west][inner sep=0.75pt]    {$p_{2}^1$};

\end{tikzpicture}
    \caption{Arclength respacing iteration for a triangle with angle $\theta < \frac{\pi}{3}$ at $p_0$ converges to a point.}
    \label{fig:12}
\end{figure}
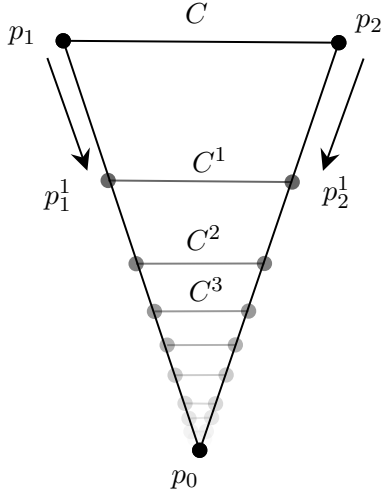

\end{eg}

\begin{eg} We consider quadrilaterals, viewed as closed polygonal curves with five vertices $\{p_0,p_1,p_2,p_3,p_4=p_0\}$.  By Theorem \ref{thm:main}, a quadrilateral must tend toward an equilateral polygonal curve, in this case a rhombus, degenerate rhombus, or a single point.  If the polygonal curve $\{p_0, p_1, p_2\}$ has the same length as the polygonal curve $\{p_2,p_3,p_4\}$, the vertex $p_2$ will remain fixed under iteration.  The limiting curve will be a rhombus.  A special case of this is the iteration of a parallelogram, shown in Figure \ref{fig:parallelogram}.

\begin{figure}[ht]
    \centering
    \tikzset{every picture/.style={line width=0.75pt}} 

\begin{tikzpicture}[x=0.75pt,y=0.75pt,yscale=-1,xscale=1]

\draw [color={rgb, 255:red, 0; green, 0; blue, 0 }  ,draw opacity=1 ][line width=0.75]    (167,200) -- (374.73,203.34) ;
\draw [shift={(374.73,203.34)}, rotate = 0.92] [color={rgb, 255:red, 0; green, 0; blue, 0 }  ,draw opacity=1 ][fill={rgb, 255:red, 0; green, 0; blue, 0 }  ,fill opacity=1 ][line width=0.75]      (0, 0) circle [x radius= 3.35, y radius= 3.35]   ;
\draw [shift={(167,200)}, rotate = 0.92] [color={rgb, 255:red, 0; green, 0; blue, 0 }  ,draw opacity=1 ][fill={rgb, 255:red, 0; green, 0; blue, 0 }  ,fill opacity=1 ][line width=0.75]      (0, 0) circle [x radius= 3.35, y radius= 3.35]   ;
\draw [color={rgb, 255:red, 0; green, 0; blue, 0 }  ,draw opacity=1 ][line width=0.75]    (423,67.61) -- (217.15,65.98) ;
\draw [shift={(217.15,65.98)}, rotate = 180.45] [color={rgb, 255:red, 0; green, 0; blue, 0 }  ,draw opacity=1 ][fill={rgb, 255:red, 0; green, 0; blue, 0 }  ,fill opacity=1 ][line width=0.75]      (0, 0) circle [x radius= 3.35, y radius= 3.35]   ;
\draw [shift={(423,67.61)}, rotate = 180.45] [color={rgb, 255:red, 0; green, 0; blue, 0 }  ,draw opacity=1 ][fill={rgb, 255:red, 0; green, 0; blue, 0 }  ,fill opacity=1 ][line width=0.75]      (0, 0) circle [x radius= 3.35, y radius= 3.35]   ;
\draw [color={rgb, 255:red, 0; green, 0; blue, 0 }  ,draw opacity=0.4 ][line width=1.5]  [dash pattern={on 5.63pt off 4.5pt}]  (423,67.61) -- (255.05,66.98) ;
\draw [shift={(255.05,66.98)}, rotate = 180.21] [color={rgb, 255:red, 0; green, 0; blue, 0 }  ,draw opacity=0.4 ][fill={rgb, 255:red, 0; green, 0; blue, 0 }  ,fill opacity=0.4 ][line width=1.5]      (0, 0) circle [x radius= 4.36, y radius= 4.36]   ;
\draw [shift={(423,67.61)}, rotate = 180.21] [color={rgb, 255:red, 0; green, 0; blue, 0 }  ,draw opacity=0.4 ][fill={rgb, 255:red, 0; green, 0; blue, 0 }  ,fill opacity=0.4 ][line width=1.5]      (0, 0) circle [x radius= 4.36, y radius= 4.36]   ;
\draw [color={rgb, 255:red, 0; green, 0; blue, 0 }  ,draw opacity=0.4 ][line width=1.5]  [dash pattern={on 5.63pt off 4.5pt}]  (338.25,203.34) -- (423,67.61) ;
\draw [shift={(423,67.61)}, rotate = 301.98] [color={rgb, 255:red, 0; green, 0; blue, 0 }  ,draw opacity=0.4 ][fill={rgb, 255:red, 0; green, 0; blue, 0 }  ,fill opacity=0.4 ][line width=1.5]      (0, 0) circle [x radius= 4.36, y radius= 4.36]   ;
\draw [shift={(338.25,203.34)}, rotate = 301.98] [color={rgb, 255:red, 0; green, 0; blue, 0 }  ,draw opacity=0.4 ][fill={rgb, 255:red, 0; green, 0; blue, 0 }  ,fill opacity=0.4 ][line width=1.5]      (0, 0) circle [x radius= 4.36, y radius= 4.36]   ;
\draw [color={rgb, 255:red, 0; green, 0; blue, 0 }  ,draw opacity=1 ][line width=0.75]    (218,54) -- (253.52,53.67) ;
\draw [shift={(256.52,53.64)}, rotate = 539.46] [fill={rgb, 255:red, 0; green, 0; blue, 0 }  ,fill opacity=1 ][line width=0.08]  [draw opacity=0] (10.72,-5.15) -- (0,0) -- (10.72,5.15) -- (7.12,0) -- cycle    ;
\draw    (373,216) -- (340.32,215.78) ;
\draw [shift={(337.32,215.76)}, rotate = 360.39] [fill={rgb, 255:red, 0; green, 0; blue, 0 }  ][line width=0.08]  [draw opacity=0] (10.72,-5.15) -- (0,0) -- (10.72,5.15) -- (7.12,0) -- cycle    ;
\draw [color={rgb, 255:red, 0; green, 0; blue, 0 }  ,draw opacity=0.4 ][line width=1.5]  [dash pattern={on 5.63pt off 4.5pt}]  (167,201) -- (338.25,203.34) ;
\draw [shift={(338.25,203.34)}, rotate = 0.78] [color={rgb, 255:red, 0; green, 0; blue, 0 }  ,draw opacity=0.4 ][fill={rgb, 255:red, 0; green, 0; blue, 0 }  ,fill opacity=0.4 ][line width=1.5]      (0, 0) circle [x radius= 4.36, y radius= 4.36]   ;
\draw [shift={(167,201)}, rotate = 0.78] [color={rgb, 255:red, 0; green, 0; blue, 0 }  ,draw opacity=0.4 ][fill={rgb, 255:red, 0; green, 0; blue, 0 }  ,fill opacity=0.4 ][line width=1.5]      (0, 0) circle [x radius= 4.36, y radius= 4.36]   ;
\draw [color={rgb, 255:red, 0; green, 0; blue, 0 }  ,draw opacity=0.4 ][line width=1.5]  [dash pattern={on 5.63pt off 4.5pt}]  (255.05,66.98) -- (167,201) ;
\draw [shift={(167,201)}, rotate = 123.31] [color={rgb, 255:red, 0; green, 0; blue, 0 }  ,draw opacity=0.4 ][fill={rgb, 255:red, 0; green, 0; blue, 0 }  ,fill opacity=0.4 ][line width=1.5]      (0, 0) circle [x radius= 4.36, y radius= 4.36]   ;
\draw [shift={(255.05,66.98)}, rotate = 123.31] [color={rgb, 255:red, 0; green, 0; blue, 0 }  ,draw opacity=0.4 ][fill={rgb, 255:red, 0; green, 0; blue, 0 }  ,fill opacity=0.4 ][line width=1.5]      (0, 0) circle [x radius= 4.36, y radius= 4.36]   ;
\draw [color={rgb, 255:red, 0; green, 0; blue, 0 }  ,draw opacity=1 ][line width=0.75]    (374.73,203.34) -- (423,67.61) ;
\draw [shift={(423,67.61)}, rotate = 289.58] [color={rgb, 255:red, 0; green, 0; blue, 0 }  ,draw opacity=1 ][fill={rgb, 255:red, 0; green, 0; blue, 0 }  ,fill opacity=1 ][line width=0.75]      (0, 0) circle [x radius= 3.35, y radius= 3.35]   ;
\draw [shift={(374.73,203.34)}, rotate = 289.58] [color={rgb, 255:red, 0; green, 0; blue, 0 }  ,draw opacity=1 ][fill={rgb, 255:red, 0; green, 0; blue, 0 }  ,fill opacity=1 ][line width=0.75]      (0, 0) circle [x radius= 3.35, y radius= 3.35]   ;
\draw [color={rgb, 255:red, 0; green, 0; blue, 0 }  ,draw opacity=1 ][line width=0.75]    (217.15,65.98) -- (167,200) ;
\draw [shift={(167,200)}, rotate = 110.52] [color={rgb, 255:red, 0; green, 0; blue, 0 }  ,draw opacity=1 ][fill={rgb, 255:red, 0; green, 0; blue, 0 }  ,fill opacity=1 ][line width=0.75]      (0, 0) circle [x radius= 3.35, y radius= 3.35]   ;
\draw [shift={(217.15,65.98)}, rotate = 110.52] [color={rgb, 255:red, 0; green, 0; blue, 0 }  ,draw opacity=1 ][fill={rgb, 255:red, 0; green, 0; blue, 0 }  ,fill opacity=1 ][line width=0.75]      (0, 0) circle [x radius= 3.35, y radius= 3.35]   ;
\draw [color={rgb, 255:red, 0; green, 0; blue, 0 }  ,draw opacity=0.4 ]   (362,203.25) -- (423,67.61) ;
\draw [color={rgb, 255:red, 0; green, 0; blue, 0 }  ,draw opacity=0.3 ]   (353.67,202.33) -- (423,67.61) ;
\draw [color={rgb, 255:red, 0; green, 0; blue, 0 }  ,draw opacity=0.2 ][fill={rgb, 255:red, 0; green, 0; blue, 0 }  ,fill opacity=0.2 ]   (348.33,202.67) -- (423,67.61) ;
\draw [color={rgb, 255:red, 0; green, 0; blue, 0 }  ,draw opacity=0.1 ][fill={rgb, 255:red, 0; green, 0; blue, 0 }  ,fill opacity=0.2 ]   (343.29,203) -- (423,67.61) ;
\draw [color={rgb, 255:red, 0; green, 0; blue, 0 }  ,draw opacity=0.4 ]   (230.48,66.3) -- (167,201) ;
\draw [color={rgb, 255:red, 0; green, 0; blue, 0 }  ,draw opacity=0.3 ]   (238.84,66.87) -- (167,201) ;
\draw [color={rgb, 255:red, 0; green, 0; blue, 0 }  ,draw opacity=0.2 ][fill={rgb, 255:red, 0; green, 0; blue, 0 }  ,fill opacity=0.2 ]   (244.16,66.32) -- (167,201) ;
\draw [color={rgb, 255:red, 0; green, 0; blue, 0 }  ,draw opacity=0.1 ][fill={rgb, 255:red, 0; green, 0; blue, 0 }  ,fill opacity=0.2 ]   (249.19,65.77) -- (167,200) ;

\draw (138.84,197.51) node [anchor=north west][inner sep=0.75pt]    {$p_{0}$};
\draw (376.73,208.74) node [anchor=north west][inner sep=0.75pt]    {$p_{1}$};
\draw (428.03,39.28) node [anchor=north west][inner sep=0.75pt]    {$p_{2}$};
\draw (195.03,40.28) node [anchor=north west][inner sep=0.75pt]    {$p_{3}$};
\draw (310.73,205.74) node [anchor=north west][inner sep=0.75pt]    {$p_{1}^*$};
\draw (263.73,36.74) node [anchor=north west][inner sep=0.75pt]    {$p_{3}^*$};

\end{tikzpicture}

    \caption{A parallelogram will limit towards a rhombus under iteration.}
    \label{fig:parallelogram}
\end{figure}
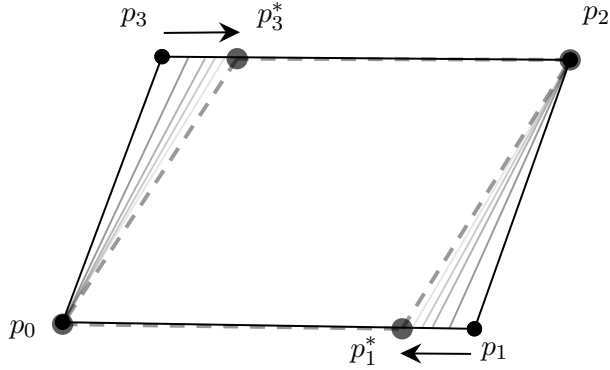

\end{eg}

\section{Application}  \label{sec:application}
In this section we provide an outline of how one can implement the arclength respacing method efficiently, and empirically explore the practical effect it has on polygonal curves approximating a given shape.

\begin{alg} An outline of the implementation of arclength respacing.  An example implementation in \texttt{Mathematica} can be found on the third author's website, \cite{t}.

\label{alg:respacing}

\vskip 5pt
\noindent
\textit{Input:}  Points $\{p_0, \ldots, p_m\}$ representing the vertices of a polygonal curve $C$.

\vskip 5pt
\noindent
\textit{Output:}  Points $\{q_0, \ldots, q_m\}$ representing the arclength respacing $f(C)$ of $C$.
   \vskip 5pt

\begin{enumerate}
    \item  Let $d_0 = 0$ and $d_k = ||p_k - p_{k-1}|| + d_{k-1}$ for $k=1, \ldots m$.  $d_k$ is the piecewise linear arclength distance from $p_0$ to $p_k$.

    \item
    Compute the piecewise linear interpolating function $g:[0,d_m] \rightarrow [0,m]$ for the points $(d_k,k)$, $0 \leq k \leq m$.  This function inverts the arclength measurements, so that $g(d_k) = k$.  Here we require $p_k \neq p_{k+1}$ in order for this inverse to be well defined.
    
   \item
  Compute the piecewise linear interpolating function $h:[0,m] \rightarrow \mathbb{R}^2$ for the discrete curve points $p_0, \ldots, p_m$.
  
  \item  Let $\delta = d_m/m$ and define $q_k = h(g(k \delta))$ for  $k = 0, 1, \ldots, m$.  The points $q_k$ are separated by an arclength distance of $\delta$ along  $C$.  These points are the vertices of $f(C)$.

  \end{enumerate}

\end{alg}

\begin{remark}
Note that in Step 2 of Algorithm \ref{alg:respacing}, the linear interpolation of the inverted arclength distances is key to avoiding inefficient arclength integral computations when performing arclength respacing.  Also note that the distance of $\delta$ in Step 4 can be chosen to fit the application.  For example, one could choose the same $\delta$ across a collection of curves in order to have consistent arclength spacing for curve comparison.
\end{remark}

We now illustrate the effect of respacing on polygonal curves approximating shapes.  Shown in Figure \ref{fig:cat} is a synthetic ``noisy cat'' curve after $0$, $1$, and $5$ applications of respacing.  The points are visually equilateral after only a few iterations.  The ears (and other smaller protrusions) become rounded, but this smoothing effect stabilizes quickly.  The jagged point on the right side of the figure does not smooth out; this is an artifact of the choice of starting point for the respacing.

\begin{figure}[ht]
\centering
\includegraphics[width=0.95\textwidth]{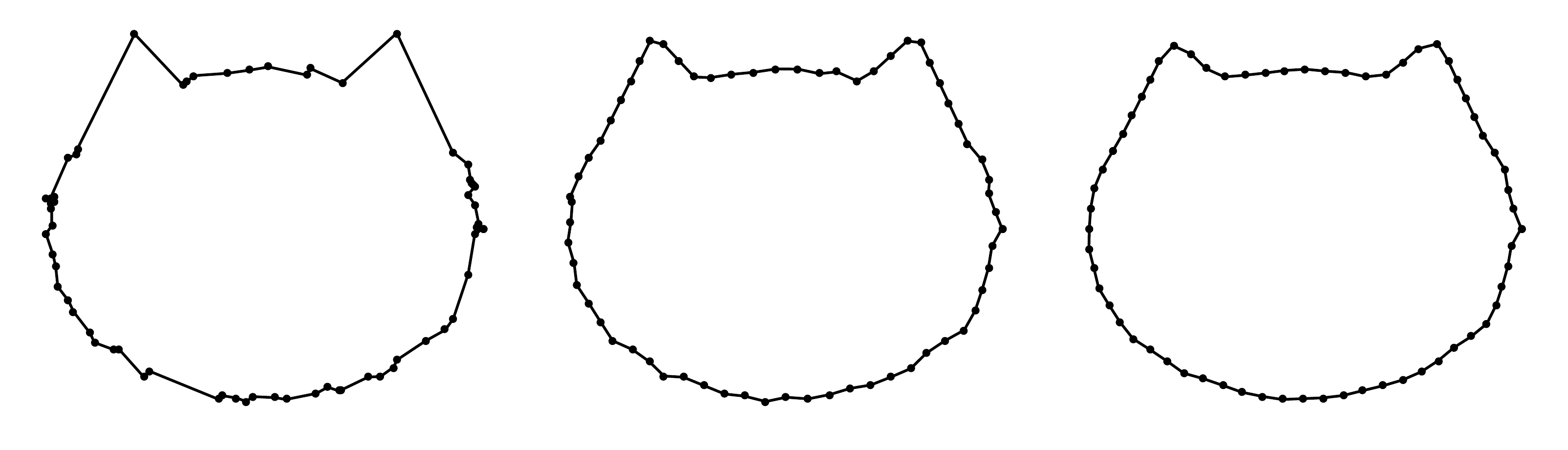}
\caption{A ``noisy cat'' polygonal curve after $0$, $1$, and $5$ iterations of arclength respacing. }
\label{fig:cat}
\end{figure}

The noisy cat curve consists of $65$ points, with $x,y$ values in the range $[-1,1]$.  We compute some basic statistics at the $n^{th}$ iteration to quantify the effect of the arclength respacing: the standard deviation $\sigma^n$ of the collection of distances $d^n_k = ||p^n_{k} - p_{k-1}^n||$, $k=1, \ldots, m$,  the maximum interpoint distance $\max^n =$ $\displaystyle \max_{1 \leq k \leq n} d_k^n$, and the minimum interpoint distance $\min^n =$ $\displaystyle \min_{1 \leq k \leq n} d_k^n$.  These statistics are shown in Figure \ref{tab:cat} for various iterations.  The standard deviation $\sigma^n$ appears to decrease exponentially with a factor of about $0.54$.  After approximately 15 iterations, the standard deviation is $0$ and the maximum and minimum are equal up to first five decimal places.

\begin{figure}[h]

\centering
\begin{tabular}{|l|l|l|l|l|l|l|l|}
\hline
$\bm{n}$        & 0 & 1 & 2 & 3 & 5 & 10 & 15 \\ \hline
$\bm{\sigma^n}$ & 0.127073 & 0.01431 &  0.00342 & 0.00117 & 0.00039 & 0.00002 & 0.00000
    \\ \hline
$\bm{\sigma^n/\sigma^{n-1}}$   & -  & 0.112646 & 0.23937 & 0.34217 & 0.66789 & 0.53647 & 0.53571   \\ \hline
$\bm{\max^n}$   & 0.65736 & 0.11137 & 0.10436 & 0.10274 & 0.10217 & 0.10199 & 0.10198  \\ \hline
$\bm{\min^n}$   & 0.010769 &  0.02796 & 0.08312 & 0.09420 & 0.09923 & 0.10184 & 0.10197  \\ \hline
\end{tabular}
\caption{Measuring the respacing of the ``noisy cat'' polygonal curve.}
\label{tab:cat}
\end{figure}

We next apply arclength respacing to the outline of a jigsaw puzzle piece obtained from image segmentation, shown in Figure \ref{fig:puzzle}.  This jigsaw curve consists of $400$ points, with $x,y$ values in $[-6,6]$.  The vertices have been normalized so that mean interpoint distances are comparable to the ``noisy cat'' example, with $\delta \approx 0.1$.  Typical methods for simplifying output in segmentation, such as the Douglas-Peucker algorithm, \cite{dp}, result in a very unevenly spaced outline curve.  One iteration of arclength respacing produces a curve which appears very close to equilateral.   

\begin{figure}[h]
\centering
\includegraphics[width=0.85\textwidth]{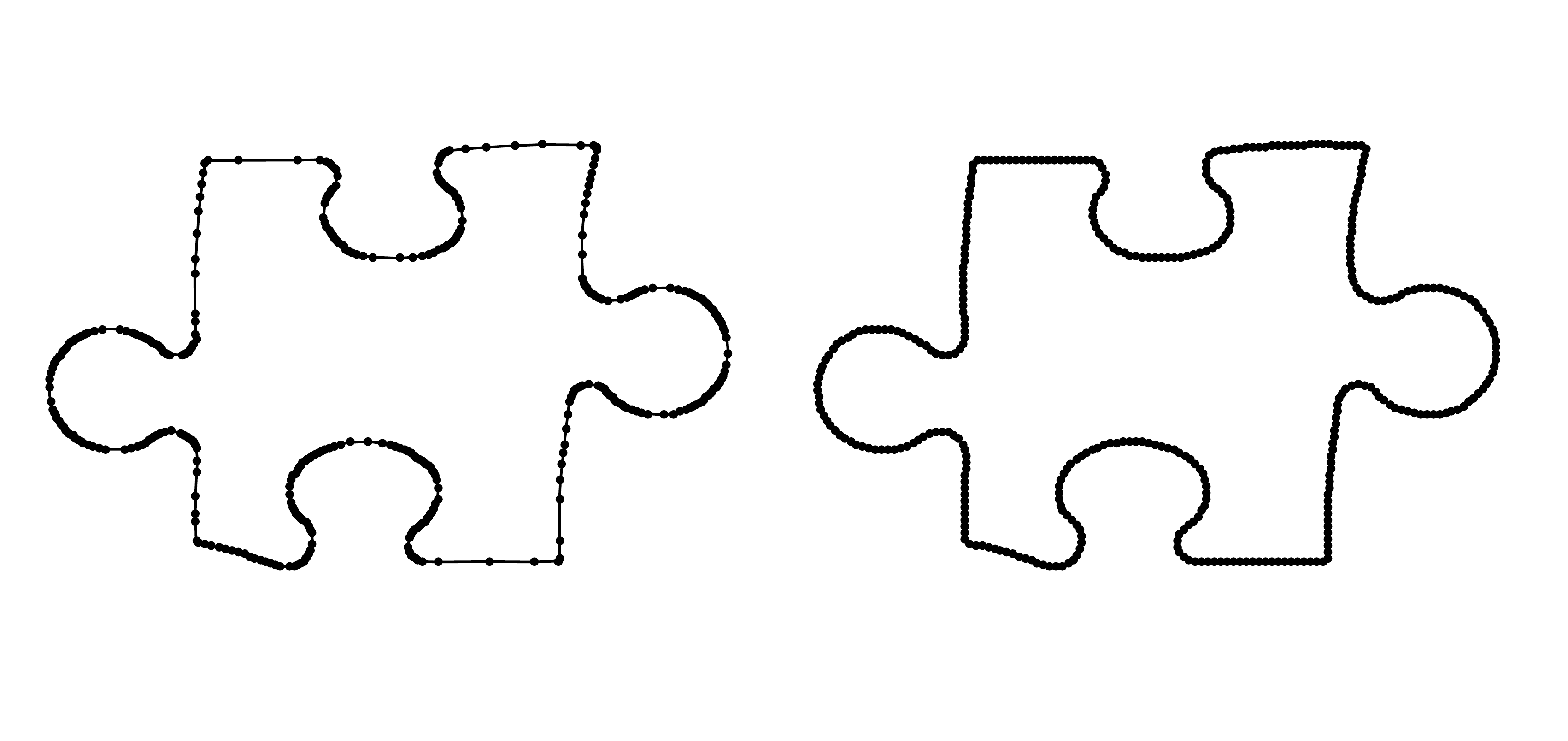}
\caption{An outline of a puzzle piece after $0$ and $1$ iteration. }
\label{fig:puzzle}
\end{figure}

Shown in Figure \ref{tab:puzzledata} are the basic statistics for the arclength respacing iteration applied to this puzzle piece curve, including  standard deviation $\sigma^n$, maximum $\max^n$, and minimum $\min^n$ of interpoint distances at iteration $n$.  Once again, the standard deviation $\sigma^n$ appears to decrease exponentially, and the standard deviation is $0$ and the maximum and minimum are equal up to first five decimal places after approximately 15 iterations.

\begin{figure}[h]

\centering
\begin{tabular}{|l|l|l|l|l|l|l|l|}
\hline
$\bm{n}$        & 0 & 1 & 2 & 3 & 5 & 10 & 15  \\ \hline
$\bm{\sigma^n}$ & 0.11327 & $0.00123$ & $.00038$  & $0.00011$ & $0.00001$ & $0.00000$
 & $0.00000$  \\ \hline
$\bm{\sigma^n/\sigma^{n-1}}$   & -  &0.01084   & 0.31176  & 0.29806  & 0.29953  & 0.40801  & 0.58714   \\ \hline
$\bm{\max^n}$   & 0.96667   & 0.10400  &0.10369   & 0.10362   & 0.10358  & 0.10358  & 0.10358   \\ \hline
$\bm{\min^n}$   &   0.03018 & 0.08805& 0.09861& 0.10218   & 0.10341  & 0.10357  & 0.10357   \\ \hline
\end{tabular}
\caption{Measuring the respacing of a puzzle piece polygonal curve.}
\label{tab:puzzledata}
\end{figure}

\section{Conclusion}  \label{sec:conclusion}

This paper introduces an arclength respacing method for polygonal curves and establishes general facts about the behavior of polygonal curves under iteration of this respacing.  There are several interesting directions for further investigation.

The impetus for this research was the application of arclength respacing to problems in computer vision.  For this application it would be very useful to better understand the rate of convergence to an equilateral polygon and the  smoothing effect, the latter being especially apparent in Figure \ref{fig:cat}.  One might also consider replacing the underlying polygonal curve with a higher order interpolating function, as is done in \cite{ho}, to affect rates of convergence and smoothing.

Simple, concrete examples of limiting polygonal curves are provided in Section \ref{sec:examples}, but we found it generally difficult to determine limiting curves exactly.  Further investigation may discover ways to obtain information about the limiting curves, e.g. bounds for final locations of vertices or knowledge of whether the vertices will converge to a single point.  Additionally, one could study stability --  ways in which perturbations to the initial vertex configuration affect the limiting curve.

Finally, the arclength respacing iteration bears a resemblance to the \textit{pentagram map}, \cite{s}, an operation defined on convex polygons with many interesting properties, including discrete integrability and a continuum limit corresponding to the Boussinesq equation,  \cite{vst,b}.  It would be worthwhile to investigate this resemblance.  A first step could be understanding the continuum limit of arclength respacing; does the infinitesimal motion of the vertices produce a non-trivial curve flow?  This curve flow would likely be non-local since the respacing process is non-local.


\section*{Acknowledgements}
$\indent$ We would like to thank the Carleton College Towsley Endowment, the Louis Stokes Alliances for Minority Participation, the Carleton Summer Science Fellowship with funding from
Carleton College S-STEM (NSF 0850318 and 156018), and North Star STEM Alliance, an LSAMP alliance
(NSF 1201983), for providing the funding that made this research possible.   We would also like to thank Peter Olver for helpful comments and advice.


\end{document}